\numberwithin{equation}{section}
\newcommand{\nc}{\newcommand}
\nc{\on}{\operatorname}
\newtheorem{theorem}{Theorem}[section]
\newtheorem{proposition}[theorem]{Proposition}
\newtheorem{lemma}[theorem]{Lemma}
\newtheorem{corollary}[theorem]{Corollary}
\theoremstyle{definition}
\newtheorem{definition}[theorem]{Definition}
\newtheorem{example}[theorem]{Example}
\newtheorem{remark}[theorem]{Remark}
\nc{\Prop}{\begin{proposition}}
\nc{\enprop}{\end{proposition}}
\nc{\Lemma}{\begin{lemma}}
\nc{\enlemma}{\end{lemma}}
\nc{\Cor}{\begin{corollary}}
\nc{\encor}{\end{corollary}}
\nc{\Def}{\begin{definition}}
\nc{\enDef}{\end{definition}}
\nc{\Th}{\begin{theorem}}
\nc{\entheorem}{\end{theorem}}
\newcommand{\C}{{\mathbb{C}}}
\newcommand{\R}{{\mathbb{R}}}
\newcommand{\Z}{{\mathbb{Z}}}
\nc{\forl}{[\mspace{-.3mu}[\hbar]\mspace{-.3mu}]}
\nc{\Ls}{(\mspace{-.3mu}(\hbar)\mspace{-.3mu})}
\newcommand{\cor}{{\bf k}}
\nc{\kor}[1][M]{\C_{#1}((\hbar))}
\nc{\koro}[1][M]{\C_{#1}[[\hbar]]}
\nc{\Dh}[1][M]{\shd_{#1}^\hbar}
\nc{\Dhh}[1][M]{\shd_{#1}\forl}
\nc{\Dhhl}[1][M]{\shd_{#1}\Ls}
\nc{\Dhhh}[1][M]{\mathscr{D}_{#1}[\hbar,\opb{\hbar}]}
\nc{\DA}[1][X]{\mathscr{D}^{\mathscr{A}}_{#1}}
\def\phi{{\varphi}}
\def\epsilon{\varepsilon}
\def\sha{\mathscr{A}}
\def\shb{\mathscr{B}}
\def\shc{\mathscr{C}}
\def\shd{\mathscr{D}}
\def\shi{\mathscr{I}}
\def\shl{\mathscr{L}}
\def\shm{\mathscr{M}}
\def\sho{\mathscr{O}}
\newcommand{\ol}{\overline}
\newcommand{\bl}{\bigl(}
\newcommand{\br}{\bigr)}
\newcommand{\lp}{{\rm(}}
\newcommand{\rp}{{\rm)}}
\nc{\RR}{\mathrm{R}}
\nc{\LL}{\mathrm{L}}
\newcommand{\into}{\hookrightarrow}
\newcommand{\A}[1][X]{\mathscr{A}_{{#1}}}
\nc{\Dhl}[1][X]{\shd_{#1}\Ls}
\nc{\Dho}[1][X]{\mathscr{D}_{#1}^\hbar(0)}
\nc{\Ohl}[1][X]{\sho_{#1}\Ls}
\nc{\Oh}[1][X]{\sho_{#1}^\hbar}
\nc{\OOh}[1][X]{\sho_{#1}[[\hbar]]}
\newcommand{\HWo}[1][X]{\widehat{\mathscr{W}}_{#1}(0)}
\newcommand{\gr}{\mathrm{gr}_\hbar}
\newcommand{\Lie}[1][]{\operatorname{\mathsf{L}}\def\temp{#1}
\ifx\temp\empty\else^{(#1)}\fi}
\newcommand{\stkHom}[1][]{\mathfrak{Hom}_{\raise1.5ex\hbox to.1em{}#1}}
\newcommand{\Int}{{\rm Int}}
\newcommand{\loc}{{\rm loc}}
\nc{\oA}[1][X]{\omega_{#1}^{\,\mathscr{A}}}
\nc{\oo}[1][X]{\omega_{#1}}
\newcommand{\Sol}{{\rm Sol}}
\renewcommand{\to}[1][]{\xrightarrow[]{#1}}
\newcommand{\isoto}[1][]{\xrightarrow[#1]%
{{\raisebox{-.6ex}[0ex][-.6ex]{$\mspace{1mu}\sim\mspace{2mu}$}}}}
\newcommand{\Hom}[1][]{\mathrm{Hom}_{\raise1.5ex\hbox to.1em{}#1}}
\newcommand{\RHom}[1][]{\RR\mathrm{Hom}_{\raise1.5ex\hbox to.1em{}#1}}
\newcommand{\Ext}[2][]{\mathrm{Ext}_{\raise1.5ex\hbox to.1em{}#1}^{#2}}
\renewcommand{\hom}[1][]{{\mathscr{H}\mspace{-4mu}om}_{\raise1.5ex\hbox to.1em{}#1}}
\newcommand{\rhom}[1][]{{\RR\mathscr{H}\mspace{-3mu}om}_{\raise1.5ex\hbox to.1em{}#1}}
\newcommand{\ext}[2][]{{\mathscr{E}\mspace{-2mu}xt}_{%
\raise1.5ex\hbox to.1em{}#1}^{#2}}
\newcommand{\BHom}[1][]{\mathrm{Bhom}_{\raise1.5ex\hbox to.1em{}#1}}
\newcommand{\Tens}[1][]{\mathbin{\otimes_{\raise1.5ex\hbox to-.1em{}{#1}}}}
\newcommand{\LTens}[1][]{\mathbin{\otimes_{\raise1.5ex\hbox to-.1em{}#1}^{L}}}
\newcommand{\Tor}[2][]{\mathrm{Tor}^{\raise1.5ex\hbox to.1em{}#1}_{#2}}
\newcommand{\tens}[1][]{\mathbin{\otimes_{\raise1.5ex\hbox to-.1em{}{#1}}}}
\newcommand{\dtens}[1][]%
{{\overset{\mathrm{L}}{\underline{\otimes}}}_{#1}}
\nc{\aut}{{\sha\mspace{-1mu}\mit{ut}}\,}
\newcommand{\Endo}[1][]{\mathrm{End}_{\raise1.5ex\hbox to.1em{}#1}}
\newcommand{\Aut}[1][]{\mathrm{Aut}_{\raise1.5ex\hbox to.1em{}#1}}
\newcommand{\sect}{\Gamma}
\newcommand{\rsect}{\mathrm{R}\Gamma}
\newcommand{\Rb}{{\rm b}}
\newcommand{\SSi}{\on{SS}}
\newcommand{\opb}[1]{#1^{-1}}
\DeclareMathOperator{\ori}{or}
\DeclareMathOperator{\chv}{char}
\DeclareMathOperator{\hchv}{hypchar}
\newcommand{\indlim}[1][]{\mathop{\varinjlim}\limits_{#1}}
\newcommand{\Pro}{\mathrm{Pro}}
\newcommand{\eqdot}{\mathbin{:=}}
\newcommand{\cl}{\colon}
\newcommand{\scbul}{{\,\raise.4ex\hbox{$\scriptscriptstyle\bullet$}\,}}
\newcommand{\ba}{\begin{array}}
\newcommand{\ea}{\end{array}}
\newcommand{\bnum}{\begin{enumerate}[{\rm(i)}]}
\newcommand{\enum}{\end{enumerate}}
\newcommand{\banum}{\begin{enumerate}[{\rm(a)}]}
\newcommand{\eanum}{\end{enumerate}}
\newcommand{\eq}{\begin{eqnarray}}
\newcommand{\eneq}{\end{eqnarray}}
\newcommand{\eqn}{\begin{eqnarray*}}
\newcommand{\eneqn}{\end{eqnarray*}}
\nc{\Der}{\on{D}}
\nc{\QED}{\end{proof}}
\nc{\bA}[1][X]{\gr({\sha}_{#1})}
\nc{\Derb}{\mathrm{D}^{\mathrm{b}}}
\nc{\hs}{\hspace*}
\nc{\Supp}{\on{Supp}}
\nc{\tr}{\on{tr}}
\newcommand{\HHWo}[1][X]{\mathcal{HH}(\HWo[])}
\newcommand{\RD}{{\rm D}}
\nc{\RDAA}[1][X]{\mathrm{D}^\prime_{\mathscr{A}_{#1}}}
\nc{\RDA}{\mathrm{D}^\prime_{\mathscr{A}}}
\nc{\RDAh}{\mathrm{D}^\prime_{\mathscr{A}^{\rm loc}}}
\nc{\RDO}{\mathrm{D}^\prime_{\mathscr{O}}}
\nc{\RDOl}{\mathrm{D}^\prime_{\mathscr{O}^{\hbar,\rm loc}}}
\nc{\RDDO}{\mathrm{D}_{\mathscr{O}}}
\nc{\RDDA}{\mathrm{D}_{\mathscr{A}}}
\nc{\RDD}{\mathrm{D}^\prime}
\nc{\conv}[1][]{\mathop{\circ}\limits_{#1}}
\nc{\sconv}[1][]{\mathop{\ast}\limits_{#1}}
\nc{\ssum}{\mathop{\mbox{\normalsize$\sum$}}}
\nc{\de}[1][X]{\delta_{#1}} 
\nc{\vs}{\vspace}
\nc{\dA}[1][X]{{{\delta}_*\mspace{1mu}\mathscr{A}_{{#1}}}}
\nc{\dO}[1][X]{{{\delta_{{#1}}}_*\mspace{1mu}\mathscr{O}_{{#1}}}}
\nc{\dGA}[1][X]{\gr(\mathscr{C}_{{#1}})}
\nc{\OS}[1][X]{\sho_{#1}}
\nc{\soplus}{\mathop{\text{\scriptsize\raisebox{.5ex}{$\displaystyle\bigoplus$}}}}
\nc{\Inv}{\on{Inv}}
\nc{\stkInv}{\mathfrak{Inv}}
\nc{\bwr}{{\mbox{\large$\wr$}}}
\nc{\be}{\begin{enumerate}}
\nc{\ee}{\end{enumerate}}
\nc{\stan}{\mathrm{stan}}
\nc{\Db}{\RD^\Rb}
\nc{\pt}{\mathrm{pt}}
\nc{\BBD}{\mathbb{D}}
\nc{\rC}{\mathrm{C}}
\nc{\scup}{\mathop{\text{\scriptsize\raisebox{.5ex}{$\displaystyle\bigcup$}}}}
\nc{\tX}{{\widetilde{X}}}
\nc{\AL}{\A[\Lambda/X]}
\nc{\ALa}{\A[\Lambda^a]}
\nc{\Gr}{\on{Gr}}
\nc{\CL}{\on{\mathrm{C}_\Lambda}}
\nc{\codim}{\on{codim}}
\nc{\Chl}{\on{char_{\Lambda}}}
\nc{\Chlo}{\on{char_{\Lambda_0}}}
\nc{\ChM}{\on{char_{M}}}
\nc{\DL}{\shd_\shl}
\nc{\DLl}{\shd_\shl^\loc}
\nc{\rmC}{\rm C}
\nc{\Zh}{\Z\forl}
\nc{\PZ}{\Pro(\Zh)}
\nc{\coco}{{cohomologically complete}}
\nc{\rpi}{{{\rm R}\pi}}
\nc{\shal}{{\sha^\loc}}
\nc{\cc}{cohomologically complete}
\newcommand{\fut}[1]{{#1}^{\uparrow}}
\newcommand{\pas}[1]{{#1}^{\downarrow}}
\nc{\Cd}{\shc}
\begin{document}

\title{Hyperbolic systems and propagation on causal manifolds}
\author{Pierre Schapira%
\footnote{Key words: global propagation, microlocal theory of sheaves, $\shd$-modules, hyperbolic systems, hyperfunctions}
} 
\maketitle

\begin{abstract}
In this paper, which is essentially a survey, we solve  the global Cauchy problem on causal manifolds for hyperbolic 
systems of linear partial differential equations in the framework of hyperfunctions. 
 Besides the classical  Cauchy-Kowalevsky theorem, our proofs
only use tools and ideas from the microlocal theory of sheaves of~\cite{KS90}, that is, of purely
algebraic and geometric nature. 
The study of hyperbolic $\shd$-modules is only sketched in loc.\ cit.\  and the 
global propagation results are mainly extracted from~\cite{DS99}. 
\end{abstract}

\section{Introduction}
The study of the Cauchy problem in the framework of distributions theory is
extremely difficult  and there is no 
characterization of the class of differential operators for which the problem is well posed, 
although some particular situations are well understood: operators with
constant coefficients or operators with simple characteristics
(see~\cite{Ho83}). To make an history of this subject is out of the scope of
this paper and we shall only quote J.~Leray~\cite{Le53}. 

If one replaces the sheaf of distributions by the sheaf of Sato's
hyperfunctions, the situation drastically simplifies and the Cauchy problem in
this setting was solved in~\cite{BS73} for a single differential operator and 
 in~\cite{KS79} for microfunctions solutions of microdifferential systems. 
The main difference between distribution and hyperfunction solutions  is that 
for hyperfunctions the situation is governed by the 
principal symbol of the operator (or the characteristic variety of the
system), contrary to the case of distributions. 

However, following~\cite{KS90}, a new idea has emerged: one can treat
hyperfunction solutions of  linear partial differential equations (LPDE) from
a purely sheaf theoretical point of view, the only analytic tool being the
classical Cauchy-Kowalevsky theorem. This idea is developed all along this
book and is applied in particular to the study of hyperbolic systems, but this
study is performed in a very general setting, with emphasis on the microlocal
point of view (see loc.\ cit.\ Prop.~11.5.8) and we think it may be useful to
give a more direct and elementary approach to hyperbolic systems. 

In this paper, we shall show how to solve the Cauchy problem and to treat the
propagation of solutions for general systems of LPDE in the framework of
hyperfunctions by using  the Cauchy-Kowalevsky theorem (and its extension to
systems by Kashiwara~\cite{Ka70}) and some tools from the microlocal theory of
sheaves of~\cite{KS90}. Namely, we shall use the microsupport of sheaves, the
fact that the microsupport of the complex of holomorphic solutions of a
$\shd$-module is contained in the characteristic variety of the $\shd$-module
and a theorem which gives a bound to the microsupport of the restriction to a submanifold of a sheaf. 
 
Hence, after recalling first some elementary facts of differential and
symplectic geometry, we shall recall the notion of a system of LPDE on a
complex manifold $X$ (that is, a $\shd_X$-module), the properties of its
characteristic variety and the Cauchy-Kowalevsky-Kashiwara theorem
(see~\cite{Ka70}). Next we introduce the microsupport of sheaves on a real
manifold $M$ following ~\cite{KS90}, the natural tool to describe
phenomena of propagation. We state the theorem which, given a sheaf $F$ on $M$
and a submanifold $N$ of $M$, gives a bound to the
microsupport of the sheaves $F\vert_N$ and $\rsect_NF$. 
Then we study LPDE on real manifolds, define the hyperbolic characteristic
variety and state the main theorems: one can solve  the Cauchy problem  for
hyperfunction solutions of hyperbolic systems and such solutions propagate in
the hyperbolic directions.

Finally, we study global propagation  on causal manifolds
following~\cite{DS99}.  We call here a causal manifold a pair 
$(M,\lambda)$ where $M$ is a smooth connected manifold and $\lambda$ is a
closed convex proper cone of the cotangent bundle with non empty interior at
each $x\in M$ and such that the order relation $\preceq$  associated with
$\lambda$ (by considering oriented curves whose tangents belong to the polar
cone to $\lambda$)  is closed and proper. As an immediate application of our
results, we find that if $P$ is a differential operator for which the non-zero
vectors of $\lambda$ are hyperbolic,  then $P$ induces an isomorphism on the
space $\sect_A(M;\shb_M)$  of hyperfunctions on $M$ supported by a closed set $A$ as soon as 
$A\not=M$ and $A$ is past-like. 

As mentioned in the abstract,  the study of hyperbolic $\shd$-modules is only sketched in~\cite{KS90} and 
this is the reason of this paper.

\section{Basic geometry}
In this section, we recall some elementary facts of differentiable and symplectic geometry.

\subsubsection*{Normal and conormal bundles}
Let $M$ be a real (or complex) manifold. We denote by $\tau\cl TM\to M$ its tangent bundle and by 
$\pi\cl T^*M\to M$ its cotangent bundle. If $N$ is a submanifold of $M$ we have the exact sequences of vector bundles on $N$:
\eqn
&&0\to TN\to N\times_MTM\to T_NM\to 0,\\
&&0\to T^*_NM\to N\times_MT^*M\to T^*N\to 0.
\eneqn
The vector bundle $T_NM$ is called the normal bundle to $N$ in $M$ and the vector bundle $T^*_NM$ is called the conormal bundle to $N$ in $M$. 
In the sequel, we shall identify  $M$  to  $T^*_MM$, the zero-section of $T^*M$.

\subsubsection*{Normal cones}
Let $M$ be a real manifold and let $S,Z$ be two subsets of $M$. The {\em normal cone} $C(S,Z)$ 
is a closed conic subset of $TM$ defined as follows.
Choose a local coordinate system  in a neighborhood of $x_0\in M$. Then
\eqn
&&\left\{\parbox{70ex}{
$v\in T_{x_0}M$ belongs to $C_{x_0}(S,Z)\subset T_{x_0}M$ if and only if\\
there exist sequences $\{(x_n,y_n,\lambda_n)\}_n\subset S\times Z\times\R_{>0}$ such that\\
$x_n\to[n]x_0$, $y_n\to[n]x_0$, $\lambda_n(x_n-y_n)\to[n]v$.
}\right.
\eneqn
The projection of $C(S,Z)$ on $M$ is the set $\ol S\cap\ol Z$. If $Z=\{x\}$, one writes $C_{\{x\}}(S)$ instead of $C(S,Z)$.
This is a closed cone of $T_xM$, the set of limits when $y\in S$ goes to $x$ of half-lines issued at $x$ and passing through $y$.
More generally, assume that $N$ is a smooth closed submanifold of $M$.  
At each $x\in N$, the normal cone $C_x(Z,N)$ is empty or 
contains $T_xN$. The image of $C(Z,N)$ in the quotient bundle $T_NM$ is denoted by $C_N(Z)$. 

\subsubsection*{Cotangent bundle}
Let $M$ be a real (or complex) manifold. 
The manifold $T^*M$ is a {\em homogeneous symplectic} manifold, that is, it is endowed with a canonical $1$-form $\alpha_M$, called the Liouville form, such that $\omega_M=d\alpha_M$ is a symplectic form, that is, a closed non-degenerate $2$-form.
In a local coordinate system $x=(x_1,\dots,x_n)$, 
\eqn
&&\alpha_M=\sum_{j=1}^n \xi_j\,dx_j,\quad \omega_M=\sum_{j=1}^nd\xi_j\wedge dx_j.
\eneqn
The $2$-form $\omega_M$ defines an isomorphism 
$H\cl TT^*M\simeq T^*T^*M$ called the Hamiltonian isomorphism.

\subsubsection*{Normal cones in a cotangent bundle}
Consider the particular case of a smooth Lagrangian submanifold $\Lambda$ of a cotangent bundle $T^*M$. 
The Hamiltonian isomorphism $TT^*M\isoto T^*T^*M$ induces an isomorphism
\eq\label{eq:normalcotang}
&& T_\Lambda T^*M\isoto T^*\Lambda.
\eneq
On the other-hand, consider a vector bundle $\tau\cl E\to M$. It gives rise to a morphism of vector bundles over $M$,
 $\tau'\cl TE\to E\times_MTM$ which by duality gives the map
\eq\label{eq:taud}
&&\tau_d\cl E\times_MT^*M\to T^*E.
\eneq
By restricting to the zero-section of $E$,  we get the map:
\eqn
&& T^*M\into T^*E.
\eneqn
Now consider the case of a closed submanifold $N\into M$. Using~\eqref{eq:taud} with $E=T^*_NM$, we get an embedding
$T^*N\into T^*T^*_NM$ which, using~\eqref{eq:normalcotang}, gives the embedding
\eq\label{eq:T*MintoT}
&&T^*N\into T_{T^*_NM}T^*M.
\eneq
If we choose local coordinates $(x,y)$ on $M$ such that $N=\{y=0\}$ and if one denotes by $(x,y;\xi dx,\eta dy)$ the associated symplectic coordinates on $T^*M$, then $T^*_NM=\{(x,y;\xi,\eta);y=\xi=0\}$. Denote by $(x,v\partial_y,w\partial_\xi,\eta dy)$ the associated coordinates on $T_{T^*_NM}T^*M$. Then the embedding $T^*N\into T_{T^*_NM}T^*M$ is described by
$(x;\xi)\mapsto (x,0;\xi,0)$.

\section{Linear partial differential equations}

References are made to \cite{Ka03}.

Let $X$ be a complex manifold.
One denotes by $\shd_X$  the sheaf of rings of holomorphic (finite order) differential
operators. A system of linear differential equations on $X$ is   a 
left coherent $\shd_X$-module $\shm$. The link with the intuitive notion of a
system of linear differential equations is as follows.
Locally on $X$, $\shm$ may be represented as the cokernel 
of a matrix $\cdot P_0$ of differential operators acting on the right: 
\eqn
&&\shm\simeq \shd_X^{N_0}/\shd_X^{N_1}\cdot P_0.
\eneqn
By classical arguments of analytic geometry (Hilbert's syzygies
theorem), one shows that 
$\shm$ is locally isomorphic to the cohomology of a bounded complex
$$\shm^\bullet\eqdot
0\to \shd_X^{N_r}\to\cdots\to\shd_X^{N_1}\to[\cdot P_0]\shd_X^{N_0}\to 0.$$
The complex of holomorphic solutions of $\shm$, denoted $\Sol(\shm)$,
(or better in the language of derived categories,
$\rhom[\shd_X](\shm,\sho_X)$), is obtained by applying 
$\hom[\shd_X](\cdot,\sho_X)$ to $\shm^\bullet$. Hence
\eq\label{eq:solm}
&&\Sol(\shm)\simeq
0\to \sho_X^{N_0}\to[P_0\cdot]\sho_X^{N_1}\to\cdots\sho_X^{N_r}\to 0,
\eneq
where now $P_0\cdot$ operates on the left.

One defines naturally the characteristic variety of $\shm$, 
denoted $\chv(\shm)$, a closed complex analytic subset of $T^*X$, 
conic with respect to the action of $\C^\times$ on $T^*X$.
For example, if $\shm$ has a single generator $u$ with relation $\shi u=0$, where
$\shi$ is a locally finitely generated left ideal of $\shd_X$, then 
\eqn
&&\chv(\shm)=\{(z;\zeta)\in T^*X; \sigma(P)(z;\zeta)=0\mbox{ for all }P\in\shi\},
\eneqn
where $\sigma(P)$ denotes the principal symbol of $P$.

The fundamental  result below was obtained in~\cite{SKK73}.
\begin{theorem}
Let $\shm$ be a coherent $\shd_X$-module. Then 
$char(\shm)$ is a closed conic  complex analytic {\em involutive} \lp {\em i.e.,} co-isotropic\rp\, subset of $T^*X$. 
\end{theorem}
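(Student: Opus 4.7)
My plan is to give an algebraic proof of involutivity in the style of Gabber. The assertion is local, so working in a coordinate chart I may assume that $\shm$ admits a global good filtration $F_\bullet\shm$ by coherent $\sho_X$-submodules compatible with the order filtration $F_\bullet\shd_X$. Then the associated graded $\on{gr}(\shm)$ is coherent over $\on{gr}(\shd_X)$, and under the classical identification $\on{gr}(\shd_X)\isoto\pi_*\sho_{T^*X}$ (principal symbol) the characteristic variety $\chv(\shm)$ coincides with the support of $\on{gr}(\shm)$ in $T^*X$. The crucial extra structure is that the commutator on $\shd_X$ decreases the order filtration by one, and hence descends to a biderivation on $\on{gr}(\shd_X)$ which in local symplectic coordinates is the standard Poisson bracket $\{-,-\}$ on $T^*X$.

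Given this setup, the theorem reduces to the purely algebraic assertion that the radical ideal $\shi\eqdot\sqrt{\operatorname{ann}_{\on{gr}\shd_X}(\on{gr}\shm)}$ is closed under $\{-,-\}$. Indeed $\shi$ is the defining ideal of the reduced subvariety $\chv(\shm)$, and at each smooth point $x\in\chv(\shm)$ the conormal space $(T_x\chv(\shm))^\perp$ is spanned by differentials $df$ with $f\in\shi$; the condition $\{\shi,\shi\}\subset\shi$ then forces every Hamiltonian vector field $H_f$ with $f\in\shi$ to be tangent to $\chv(\shm)$ at $x$, which is precisely the involutivity (coisotropy) condition.

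The hard part is this Poisson-closure property itself, i.e.\ Gabber's theorem on positively filtered noetherian rings. Its proof is a combinatorial argument with iterated commutators: given $\sigma(P),\sigma(Q)\in\shi$ of orders $p,q$, one chooses $N$ large enough that $P^N$ and $Q^N$ act on a fixed finite generating set of the good filtration in a controlled way, computes the principal symbol of $[P^N,Q^N]$ and extracts from it a high power of $\{\sigma(P),\sigma(Q)\}$ lying in $\operatorname{ann}_{\on{gr}\shd_X}(\on{gr}\shm)$, hence in $\shi$ after taking radicals. An entirely different route---the one originally taken in~\cite{SKK73}---is to pass to the sheaf $\she_X$ of microdifferential operators, exploit its flatness over $\pi^{-1}\shd_X$, and use quantized contact transformations to reduce the characteristic variety locally to a normal form in which involutivity is obvious; that path is geometrically appealing but requires the full microlocal machinery that the present survey deliberately avoids.
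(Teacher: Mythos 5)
The paper itself offers no proof of this statement: it quotes the involutivity theorem from~\cite{SKK73}, remarks that the original proof ``uses microdifferential operators of infinite order and quantized contact transformations,'' and points to Gabber~\cite{Ga81} for a purely algebraic argument. So there is nothing in the text for your write-up to diverge from; the relevant question is whether your sketch stands on its own. Your framing of the problem is correct and is exactly the standard reduction underlying Gabber's route: locally choose a good filtration, identify $\chv(\shm)$ with the support of $\on{gr}(\shm)$ over $\on{gr}(\shd_X)\simeq\pi_*\sho_{T^*X}$ (which gives closedness, conicity and analyticity, granted the standard fact that the support is independent of the good filtration), and observe that involutivity of the support is equivalent to the radical ideal $\shi=\sqrt{\operatorname{ann}(\on{gr}\shm)}$ being stable under the Poisson bracket inherited from the commutator on $\shd_X$.

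The gap is that the entire mathematical content of the theorem lives in the step you dispatch in one sentence. The assertion that $\{\shi,\shi\}\subset\shi$ is Gabber's integrability theorem, and your description of its proof --- choose $N$ large, compute $\sigma([P^N,Q^N])$, ``extract a high power of $\{\sigma(P),\sigma(Q)\}$'' --- does not work as stated. The difficulty is that $\sigma(P)\in\shi$ only means some power of $\sigma(P)$ annihilates $\on{gr}(\shm)$, and principal symbols of commutators of such operators need not annihilate anything: $\sigma([P,Q])$ equals $\{\sigma(P),\sigma(Q)\}$ only when the latter is nonzero, and iterating commutators loses control of orders precisely at the points one cares about. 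This is why the naive argument was known to fail for decades and why the result was first proved by the heavy microlocal machinery of~\cite{SKK73}; Gabber's actual proof requires a delicate induction with auxiliary filtrations and is in no sense a routine symbol computation. As written, your proposal is an accurate roadmap of \emph{where} the difficulty sits, together with a correct reduction to it, but it does not constitute a proof; you should either carry out Gabber's argument in detail or explicitly cite~\cite{Ga81} for the Poisson-closure statement rather than presenting it as a sketchable computation.
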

The proof  of the involutivity is really difficult:  it uses microdifferential operators of infinite order and quantized contact transformations. 
Later, Gabber \cite{Ga81} gave a purely algebraic (and much simpler) proof of this result.

\subsubsection*{Cauchy problem for LPDE}
Let $Y$ be a complex submanifold of the complex manifold $X$ and let $\shm$ be a coherent $\shd_X$-module. 
One can define the induced $\shd_Y$-module $\shm_Y$, but in general it is an object of the derived category
$\Derb(\shd_Y)$  which is neither concentrated in degree zero nor coherent. Nevertheless, there is a natural morphism
\eq\label{eq:CKK}
&&\rhom[\shd_X](\shm,\sho_X)\vert_Y\to\rhom[\shd_Y](\shm_Y,\sho_Y).
\eneq
Recall that one says that $Y$ is non-characteristic for $\shm$ if 
\eqn
&&\chv(\shm)\cap T^*_YX\subset T^*_XX.
\eneqn
With this hypothesis, the induced system $\shm_Y$ by $\shm$ on $Y$ is
a coherent $\shd_Y$-module and one has the Cauchy-Kowalesky-Kashiwara theorem~\cite{Ka70}:

\begin{theorem}\label{th:CKK}
Assume $Y$ is non-characteristic for $\shm$. Then $\shm_Y$ is a coherent $\shd_Y$-module and the morphism~\eqref{eq:CKK}
is an isomorphism.
\end{theorem}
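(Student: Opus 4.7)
My plan is to reduce the statement, by means of local free resolutions, to the classical scalar Cauchy-Kowalevsky theorem, which supplies the only genuine analytic input. Since the assertion is local on $Y$, I fix $y_0\in Y$ and choose local coordinates $(z',z'')$ on $X$ near $y_0$ with $Y=\{z''=0\}$, where $z''=(z_{d+1},\dots,z_n)$. Writing $\zeta''$ for the dual variable, one has $T^*_YX=\{z''=0,\zeta'=0\}$, so the non-characteristic hypothesis asserts that for every $y\in Y$ no covector $(y;0,\zeta'')$ with $\zeta''\neq 0$ lies in $\chv(\shm)$.

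The sole analytic input is the scalar Cauchy-Kowalevsky theorem: for a single operator $P$ of order $m$ with $\sigma(P)(y_0;dz_n)\neq 0$, there is a natural isomorphism of left $\shd_Y$-modules
\[
\shd_{Y\to X}\ltens[\shd_X](\shd_X/\shd_X P)\simeq\shd_Y^{\oplus m},
\]
the derived tensor product is concentrated in degree zero, and dually $\rhom[\shd_X](\shd_X/\shd_X P,\sho_X)\vert_Y\isoto\sho_Y^{\oplus m}$. This is exactly the assertion of the theorem in the case $\shm=\shd_X/\shd_X P$.

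For a general coherent $\shm$ I would take a finite free $\shd_X$-resolution $\shd_X^{N_\bullet}\to\shm$ near $y_0$. The key step is to use the non-characteristic hypothesis, together with a Weierstrass-type preparation on the variable $\partial_n$, to normalize the matrices $P_i$ appearing in the differentials so that, after suitable row and column operations, each relevant entry has the form $\partial_n^{k}+(\text{lower order in }\partial_n)$. With this normalization the functor $\shd_{Y\to X}\ltens[\shd_X](-)$ applied to the resolution has vanishing higher Tors, yielding coherence of $\shm_Y$ and its concentration in degree zero. Running the same argument for $\rhom[\shd_X](-,\sho_X)$ and invoking the scalar Cauchy-Kowalevsky theorem termwise makes the natural transformation~\eqref{eq:CKK} a quasi-isomorphism via a standard double-complex / spectral sequence argument.

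The main obstacle I expect is the normalization step: translating the pointwise geometric condition $\chv(\shm)\cap T^*_YX\subset T^*_XX$ into an algebraic presentation of $\shm$ in which every operator appearing in the resolution is scalar-non-characteristic in the $z_n$-direction. This is Kashiwara's extension of the Weierstrass preparation theorem to coherent $\shd_X$-modules and constitutes the essential algebraic content of~\cite{Ka70}.
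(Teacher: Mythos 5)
You should first be aware that the paper does not prove Theorem~\ref{th:CKK}: it is quoted from~\cite{Ka70}, and the only argument actually supplied is Example~\ref{exa:CK}, which treats the cyclic case $\shm=\shd_X/\shd_X\cdot P$ for a hypersurface $Y$ via the Sp\"ath--Weierstrass division theorem. Your identification of the two ingredients --- the scalar Cauchy--Kowalevsky theorem as the sole analytic input and Weierstrass-type division in $\partial$ as the algebraic engine --- is consistent with that example and with Kashiwara's actual proof.

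There is, however, a genuine gap in the reduction: you cannot run the comparison termwise on a \emph{free} resolution $\shd_X^{N_\bullet}\to\shm$, because a free module is characteristic for every $Y$ and the morphism~\eqref{eq:CKK} is \emph{not} an isomorphism for $\shm=\shd_X$. Indeed $\rhom[\shd_X](\shd_X,\sho_X)\vert_Y=\sho_X\vert_Y$ has stalks given by convergent power series in all $n$ variables, whereas the induced module $(\shd_X)_Y\simeq\sho_Y\tens[\sho_X]\shd_X$ is not $\shd_Y$-coherent and $\rhom[\shd_Y]((\shd_X)_Y,\sho_Y)$ produces \emph{formal} series in the transverse variables (already for $X=\C$, $Y=\{0\}$ one gets $\C\{z\}$ versus $\C[[z]]$). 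So the termwise maps of your double complex are not isomorphisms and no row/column operations on the differentials can repair this; likewise ``vanishing of higher Tors'' is automatic and contentless for free terms --- the real assertion is that the total complex has cohomology only in degree $0$, which is a statement about $\shm$, not about the resolution. The correct device, and the essential content of~\cite{Ka70}, is to resolve $\shm$ locally by finite direct sums of cyclic modules $\shd_X/\shd_X\cdot P_i$ with each $P_i$ itself non-characteristic: choosing local generators $u_i$, the hypothesis $\chv(\shm)\cap T^*_YX\subset T^*_XX$ provides for each $i$ an operator $P_i$ annihilating $u_i$ with $\sigma(P_i)\neq0$ on $T^*_YX$, which Weierstrass preparation puts in the form of Example~\ref{exa:CK}; the kernel of the surjection $\bigoplus_i\shd_X/\shd_X\cdot P_i\to\shm$ is again coherent and non-characteristic, so one iterates and concludes by the five lemma from the cyclic case. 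You also need the (easy but separate) reduction from codimension $d$ to codimension $1$ through a chain of intermediate hypersurfaces non-characteristic for the successively induced modules; your sketch silently assumes $Y$ is a hypersurface when it invokes $\partial_n$.
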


\begin{example}\label{exa:CK}
Assume $\shm=\shd_X/\shd_X\cdot P$ for a differential operator $P$ of order $m$ and $Y$ is a hypersurface.
In this case, the induced system $\shm_Y$ is isomorphic to $\shd_Y^m$ and one recovers the classical 
 Cauchy-Kowalesky theorem.
 
 More precisely, choose a local coordinate system $z=(z_0,z_1,\dots,z_n)=(z_0,z')$ 
 on $X$ such that $Y=\{z_0=0\}$. 
Then $Y$ is non-characteristic with
respect to $P$ ({\em i.e.,} \/for the $\shd_X$-module
$\shd_X/\shd_X\cdot P$) 
if and only if $P$ is written as 
\eq
&&P(z_0,z';\partial_{z_0},\partial_{z'})=
\sum_{0\leq j\leq m}a_j(z_0,z',\partial_{z'})\partial_{z_0}^j
\eneq
where $a_j(z_0,z',\partial_{z'})$ is a differential operator not depending
on $\partial_{z_0}$ of order $\leq m-j$ and $a_m(z_0,z')$ (which is a holomorphic
function on $X$) satisfies: $a_m(0,z')\neq 0$. 
By the definition of  the induced system $\shm_Y$ we obtain
\eqn
&&\shm_Y\simeq \shd_X/(z_0\cdot\shd_X+\shd_X\cdot P).
\eneqn
 By the Sp\"ath-Weierstrass division theorem for differential operators,  any $Q\in\shd_X$ may be written uniquely in a neighborhood of $Y$ as
\eqn
&&Q= R\cdot P+\sum_{j=0}^{m-1}S_j(z,\partial_{z'})\partial_{z_0}^j,
\eneqn
hence, as
\eqn
&&Q= z_0\cdot Q_0+R\cdot P+\sum_{j=0}^{m-1}R_j(z',\partial_{z'})\partial_{z_0}^j.
\eneqn
Therefore $\shm_Y$ is isomorphic to $\shd_Y^m$. Theorem~\ref{th:CKK} gives:
\eqn
&&\hom[\shd_X](\shm,\sho_X)\vert_Y\simeq\sho_Y^m,\quad \ext[\shd_X]{1}(\shm,\sho_X)\vert_Y\simeq0.
\eneqn
In other words, the morphism which to a holomorphic solution $f$  of the homogeneous equation $Pf=0$ 
associates its $m$-first traces on $Y$ is an isomorphism and one can solve the equation $Pf=g$ is a neighborhood of each point of $Y$.

This is exactly the classical Cauchy-Kowalesky theorem.
Note that the proof of Kashiwara of the general case is deduced form the classical theorem by purely algebraic arguments.
\end{example}

\section{Microsupport and propagation}

References  are made to~\cite{KS90}.

The idea of microsupport takes its origin in the study of LPDE and particularly in the study of hyperbolic systems.
Let $F$ be a sheaf on a real manifold $M$. Roughly speaking, one says that $F$ propagates in the codirection 
$p=(x_0;\xi_0)\in T^*M$ if for any open set $U$ of $M$ such that $x_0\in\partial U$, $\partial U$ is smooth in a neighborhood of $x_0$ and $\xi_0$ is the exterior normal vector to $U$ at $x_0$, any section of $F$ on $U$ extends through $x_0$, that is, extends to a bigger open set $U\cup V$ where $V$ is a neighborhood of $x_0$. 

\begin{example}\label{exa:CKprop}
(i) Assume $X$ is a complex manifold, $F$ is the sheaf of holomorphic solutions of the equation $Pf=0$ where $P$ is a differential operator and $\sigma(P)(p)\not=0$. Then $F$ propagates in the codirection $p$. This follows easily from the Cauchy-Kowalevsky theorem (see~\cite[\S~9.4]{Ho83}). 

\vspace{0.2ex}\noindent
(ii) Assume $M=\R\times N$ where $N$ is a Riemannian manifold. Let $P=\partial_t^2-\Delta$ be the wave equation. 
(Here, $t$ is the coordinate on $\R$ and $\Delta$ is the Laplace operator on $N$.)
Let $F$ be the sheaf of distribution solutions of the equation $Pu=0$. Then $F$ propagates  at each 
$p=(t_0,x_0;\pm1,0)$.
\end{example}

\subsubsection*{Microsupport}
Let $M$ denote a {\em real} manifold of class $C^\infty$, let $\cor$ be a
field, and let $F$ be a bounded complex of sheaves of $\cor$-vector
spaces on $M$ (more precisely, $F$ is an object of $\Derb(\cor_M)$, the
bounded derived category of sheaves on $M$). 

\begin{definition}
Let $F\in \Derb(\cor_M)$. The microsupport $\SSi(F)$ is the closed $\R^+$-conic subset of $T^*M$ defined as follows: 
for an open subset $W\subset T^*M$ one has $W\cap\SSi(F)=\emptyset$ if and only if 
for any $x_0\in M$ and any
real $\Cd^1$-function $\phi$ on $M$ defined in a neighborhood of $x_0$ 
with $(x_0;d\phi(x_0))\in W$, one has
$(\rsect_{\{x;\phi(x)\geq\phi(x_0)\}} F)_{x_0}\simeq0$.  
\end{definition}
In other words, $p\notin\SSi(F)$ if the sheaf $F$ has no cohomology 
supported by ``half-spaces'' whose conormals are contained in a 
neighborhood of $p$. 
Note that the condition $(\rsect_{\{x;\phi(x)\geq\phi(x_0)\}} F)_{x_0}\simeq0$ is equivalent to the following:
setting $U=\{x\in M;\phi(x)<\phi(x_0)\}$, one has the isomorphism for all $j\in\Z$
\eqn
\indlim[V\ni x_0]H^j(U\cup V;F)\isoto H^j(U;F).
\eneqn

\begin{itemize}
\item
By its construction, the microsupport is $\R^+$-conic, that is,
invariant by the action of  $\R^+$ on $T^*M$. 
\item
$\SSi(F)\cap T^*_MM=\pi(\SSi(F))=\Supp(F)$.
\item
The microsupport satisfies the triangular inequality:
if $F_1\to F_2\to F_3\to[{\;+1\;}]$ is a
distinguished triangle in  $\Derb(\cor_M)$, then 
$\SSi(F_i)\subset\SSi(F_j)\cup\SSi(F_k)$ for all $i,j,k\in\{1,2,3\}$
with $j\not=k$. 
\end{itemize}
In the sequel, for a locally closed subset $A\subset M$, we denote by $\cor_{A}$ the sheaf on $M$ which is the constant sheaf 
with stalk $\cor$ on $A$ and is zero on $M\setminus A$. 

\begin{example}\label{ex:microsupp}
(i) If $F$ is a non-zero local system on $M$ and $M$ is connected, then $\SSi(F)=T^*_MM$.

\noindent
(ii) If $N$ is a closed submanifold of $M$ and $F=\cor_N$, then 
$\SSi(F)=T^*_NM$, the conormal bundle to $N$ in $M$.

\noindent
(iii) Let $\phi$ be a $\Cd^1$-function such that $d\phi(x)\not=0$ 
whenever $\phi(x)=0$.
Let $U=\{x\in M;\phi(x)>0\}$ and let $Z=\{x\in M;\phi(x)\geq0\}$. 
Then 
\eqn
&&\SSi(\cor_U)=U\times_MT^*_MM\cup\{(x;\lambda d\phi(x));\phi(x)=0,\lambda\leq0\},\\
&&\SSi(\cor_Z)=Z\times_MT^*_MM\cup\{(x;\lambda d\phi(x));\phi(x)=0,\lambda\geq0\}.
\eneqn
\end{example}
For a precise definition of being co-isotropic,
we refer to~\cite[Def.~6.5.1]{KS90}.

\begin{theorem}\label{th:coisotr}
Let $F\in \Derb(\cor_M)$. Then its microsupport 
$\SSi(F)$ is co-isotropic.
\end{theorem}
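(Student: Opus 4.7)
The plan is to deduce co-isotropy of $\SSi(F)$ from the sheaf-theoretic propagation properties of $F$, following the strategy of KS90 $\S$6.5. By the definition of co-isotropic for a general closed subset (KS90 Def.~6.5.1, formulated in terms of the Whitney tangent cone $C_{p_0}(\SSi(F))$), it suffices to show that the complement $T^*M \setminus \SSi(F)$ is stable under the Hamiltonian flow of any $C^1$ function $f$ vanishing on $\SSi(F)$ in a neighborhood of the point in question. This reduces the purely symplectic statement of involutivity to a propagation statement for local cohomology stalks of $F$.

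Concretely, I would fix $p_0 \in \SSi(F)$ and such an $f$, and argue by contradiction. Suppose $p_t := \exp(tH_f)(p_0) \notin \SSi(F)$ for some small $t > 0$. By the definition of microsupport, one can choose a $C^1$ function $\phi_t$ with $(x_t; d\phi_t(x_t)) = p_t$ and $(\rsect_{\{\phi_t \geq \phi_t(x_t)\}}F)_{x_t} \simeq 0$. I would then transport $\phi_t$ backward along the Hamiltonian flow to build a one-parameter family $\{\phi_s\}_{s \in [0,t]}$ of test functions satisfying $(x_s; d\phi_s(x_s)) = p_s$, where $x_s = \pi(p_s)$. Since $f$ vanishes on $\SSi(F)$, the orbit $\{p_s\}$ stays in $\{f = 0\}$, and one can arrange the family so that the boundary conormals of the sublevel sets $\{x : \phi_s(x) < \phi_s(x_s)\}$ avoid $\SSi(F)$ except along the orbit itself. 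Then the non-characteristic deformation lemma (KS90 Prop.~2.7.2 and its microlocal refinement) yields that the stalk $(\rsect_{\{\phi_s \geq \phi_s(x_s)\}}F)_{x_s}$ is independent of $s$; taking $s = 0$ produces the vanishing that witnesses $p_0 \notin \SSi(F)$, a contradiction.

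The hard part will be constructing the deformation $\phi_s$ so that the non-characteristic hypothesis genuinely holds throughout. The essential geometric input is that $H_f$ is a symplectic vector field, hence it carries the Lagrangian graph of $d\phi_s$ to another Lagrangian of the same form; this allows one to control its codirections relative to $\SSi(F)$ by shrinking neighborhoods, using the fact that $f$ vanishes not only at $p_0$ but on all of $\SSi(F)$ nearby. This is the core technical ingredient of KS90, and it is precisely here that the symplectic geometry of $T^*M$ feeds into the sheaf-theoretic propagation machinery to force involutivity of the microsupport.
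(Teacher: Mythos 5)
First, note that the paper does not actually prove this theorem: it is quoted from \cite[Th.~6.5.4]{KS90}, and only the definition of co-isotropic (\cite[Def.~6.5.1]{KS90}, formulated via normal cones for an arbitrary closed conic subset) is recalled by reference. So your proposal must be measured against the proof in loc.\ cit., which works directly with that normal-cone definition and does not proceed by Hamiltonian flows. Your opening reduction is where the content of the definition is lost: Def.~6.5.1 requires that for \emph{every} $\theta\in T^*_{p_0}T^*M$ with $C_{p_0}(\SSi(F),\SSi(F))$ contained in the hyperplane $\theta^{\perp}$, the vector $-H^{-1}(\theta)$ lies in $C_{p_0}(\SSi(F))$. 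Replacing this by invariance of $T^*M\setminus\SSi(F)$ under the flows of $C^1$ functions $f$ vanishing on $\SSi(F)$ only addresses those $\theta$ realized as $df(p_0)$ for such an $f$. For a smooth submanifold the two formulations coincide, but $\SSi(F)$ is merely a closed conic subset; producing a $C^1$ function vanishing on it with prescribed differential at $p_0$ is a Whitney-extension problem that can fail, and the normal-cone formulation exists precisely to bypass it. This equivalence is asserted, not proved.

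Second, the propagation step you defer is the entire theorem, and as sketched it is circular. The non-characteristic deformation lemma and its microlocal variants require that the conormals swept out by the moving boundaries avoid $\SSi(F)$; your family is anchored at $p_0\in\SSi(F)$ at $s=0$, so the hypothesis fails exactly where the conclusion is needed, and for intermediate $s$ nothing prevents $\SSi(F)$ (an arbitrary closed conic set, not controlled by shrinking neighborhoods) from meeting the graph of $d\phi_s$ away from $p_s$. Moreover, even if you obtained $(\rsect_{\{\phi_0\geq\phi_0(x_0)\}}F)_{x_0}\simeq 0$ for the single transported function $\phi_0$, this would not contradict $p_0\in\SSi(F)$: non-membership in the microsupport requires such vanishing for all test functions with $(x;d\phi(x))$ in a whole neighborhood of $p_0$. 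The ``microlocal refinement'' you invoke --- propagation of this vanishing along the Hamiltonian orbit of a function vanishing on $\SSi(F)$ --- is essentially a restatement of involutivity itself. A correct write-up should either reproduce the normal-cone argument of \cite[Th.~6.5.4]{KS90} or supply an independent proof of the flow-propagation statement together with the reduction of Def.~6.5.1 to it; neither is present here.
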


\subsubsection*{Microsupport and characteristic variety}
Assume now that $(X,\sho_X)$ is a complex manifold  and let $\shm$ be a coherent $\shd_X$-module.  Recall that one sets for short 
$\Sol(\shm)\eqdot\rhom[\shd_X](\shm,\sho_X)$ (see~\eqref{eq:solm}).

After identifying $X$  with its real underlying manifold,
the link between the microsupport of sheaves and the characteristic
variety of coherent $\shd$-modules is given  by:

\begin{theorem}\label{th:ssinchar1}{\rm (See~\cite[Th.~11.3.3]{KS90}.)}
Let $\shm$ be a coherent $\shd_X$-module. Then 
$\SSi(\Sol(\shm))=\chv(\shm)$.
\end{theorem}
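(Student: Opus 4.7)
The plan is to establish the two inclusions $\SSi(\Sol(\shm)) \subset \chv(\shm)$ and $\chv(\shm) \subset \SSi(\Sol(\shm))$ separately, identifying throughout $\chv(\shm)$ with its image in the cotangent bundle of the underlying real manifold of $X$ via the canonical $\R$-linear isomorphism between the holomorphic cotangent bundle and the real cotangent bundle.

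For the first inclusion, the strategy is propagation via Theorem \ref{th:CKK}. Fix $p = (x_0;\xi_0) \notin \chv(\shm)$. Since $\chv(\shm)$ is closed and conic, I would find a holomorphic function $\psi$ defined near $x_0$ whose differential at $x_0$ corresponds to $\xi_0$ under the above identification, and such that the family of complex hypersurfaces $Y_c = \{\psi = c\}$, for $c$ in a neighborhood of $c_0 = \psi(x_0)$, is entirely non-characteristic for $\shm$ in a neighborhood of $x_0$. By Theorem \ref{th:CKK}, each induced module $\shm_{Y_c}$ is coherent and the morphism $\Sol(\shm)|_{Y_c} \isoto \Sol(\shm_{Y_c})$ is an isomorphism. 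I would then foliate a neighborhood of $x_0$ by the real level sets $\{\Re\psi = t\}$ and, leaf by leaf, use the CKK isomorphism to extend sections across half-spaces, obtaining the vanishing $(\rsect_{\{\Re\psi \geq c_0\}} \Sol(\shm))_{x_0} \simeq 0$. Because this argument applies uniformly in a full neighborhood of $p$ in $T^*X$ (by openness of the complement of $\chv(\shm)$), it yields $p \notin \SSi(\Sol(\shm))$.

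The main obstacle is the reverse inclusion $\chv(\shm) \subset \SSi(\Sol(\shm))$, which demands producing actual obstructions to propagation rather than ruling them out. Both sides are closed conic involutive subsets of $T^*X$ (by SKK involutivity for the left-hand side and by Theorem \ref{th:coisotr} for the right-hand side) that agree with $\Supp(\shm)$ over the zero section, but this structural agreement alone is insufficient. One approach is to reduce, locally near a point $p \in \chv(\shm)$, to a single differential operator $P$ with $\sigma(P)(p) = 0$ via quantized contact transformations, and then to construct an explicit holomorphic solution of $P$ whose singular spectrum contains $p$ by Hamilton--Jacobi theory applied to $\sigma(P)$ together with the microdifferential operator formalism. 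An alternative, more algebraic, route would be a noetherian reduction of $\shm$ along a filtration whose successive quotients are holonomic, combined with the existence of simple holonomic modules having prescribed characteristic Lagrangian and concrete non-propagating solutions. Either route requires substantially more machinery than the single application of Cauchy--Kowalevsky--Kashiwara that sufficed for the first inclusion, and this asymmetry is exactly what makes the reverse inclusion the hard part of the theorem.
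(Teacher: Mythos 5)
The paper itself does not prove this theorem: it quotes it from \cite[Th.~11.3.3]{KS90} and only remarks that the inclusion $\SSi(\Sol(\shm))\subset\chv(\shm)$ reduces by purely algebraic arguments to the case $\shm=\shd_X/\shd_X\cdot P$, where it is Zerner's theorem \cite{Ze71}, deduced from the Cauchy--Kowalevsky theorem \emph{in its precise form due to Petrovsky and Leray}. Your overall architecture (two inclusions, the first by propagation, the second being the hard part) is consistent with this, but your argument for the first inclusion has a real gap. The Cauchy--Kowalevsky--Kashiwara isomorphism of Theorem~\ref{th:CKK} is a qualitative identification of $\Sol(\shm)\vert_{Y_c}$ with $\Sol(\shm_{Y_c})$ as complexes of sheaves \emph{on} $Y_c$; it does not by itself allow you to extend a section of $\Sol(\shm)$ given only on $\{\Re\psi<c\}$ across the leaf $\{\Re\psi=c\}$. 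Such a section has no a priori Cauchy data on $Y_c$, and if you instead take its traces on a nearby hypersurface $Y_{c-\epsilon}$ inside the domain, what you need is a lower bound, uniform as $\epsilon\to0$, on the size of the domain on which the Cauchy problem with those data is solvable. That uniform bound is exactly the quantitative content of the Petrovsky--Leray form of Cauchy--Kowalevsky that Zerner uses, and it is not a consequence of the sheaf-level isomorphism of Theorem~\ref{th:CKK}. Your ``leaf by leaf'' sweep therefore needs either this quantitative input or the reduction to a single operator followed by Zerner's lemma, together with a continuity argument over the family of half-spaces, to be made rigorous.

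The second, and more serious, issue is that the reverse inclusion $\chv(\shm)\subset\SSi(\Sol(\shm))$ is not proved at all: you describe two possible strategies (quantized contact transformations plus a Hamilton--Jacobi construction of non-propagating solutions, or a holonomic d\'evissage) and carry out neither, explicitly acknowledging that either requires substantial further machinery. As it stands this part is a plan rather than an argument; the first of your two strategies is indeed essentially the route taken in \cite{SKK73} and \cite{KS90}, but the theorem cannot be regarded as established on the basis of your text. Note also that only the inclusion $\SSi(\Sol(\shm))\subset\chv(\shm)$ is used elsewhere in the paper (in the proofs of Theorems~\ref{th:hyp3} and~\ref{th:hyp4}), so for the purposes of this survey the reverse inclusion could simply be cited rather than reproved.
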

The inclusion $\SSi(\Sol(\shm))\subset \chv(\shm)$
is the most useful in practice. By purely algebraic arguments one reduces its proof to the case where 
$\shm=\shd_X/\shd_X\cdot P$, in which case this result is due to  Zerner~\cite{Ze71} who deduced it from 
the Cauchy-Kowalevsky theorem in its precise form given by Petrovsky and Leray 
(see also~\cite[\S~9.4]{Ho83}). As a corollary of Theorems~\ref{th:coisotr} and~\ref{th:ssinchar1}, one recovers the fact that the characteristic variety 
of a coherent $\shd_X$-module is co-isotropic.

\subsubsection*{Propagation 1}

Consider a closed submanifold $N$ of $M$  and let $F\in\Derb(\cor_M)$. 
There is a natural morphism
\eq\label{eq:sectNvertN}
&& F\vert_N\to\rsect_NF\tens\ori_{N/M}\,[d].
\eneq
Here, $\ori_{N/M}$ is the relative orientation sheaf and $d$ is the codimension of $N$. To better understand this morphism, consider the case where $N$ is a hypersurface dividing $M$ into two closed half-spaces $M^+$ and $M^-$. Then we have a distinguished triangle
\eq\label{eq:dtNM}
&&\rsect_NF\to[\alpha] (\rsect_{M^+}F)\vert_N\oplus(\rsect_{M^-}F)\vert_N\to[\beta] F\vert_N\to[+1].
\eneq
Here $\alpha(u)=(u,-u)$ and $\beta(v,w)=v+w$, but one can also replace the morphism $\alpha$ with $-\alpha$. If one wants 
morphisms intrinsically defined, then one way is to replace $\rsect_NF$ with $\rsect_NF\tens\ori_{N/M}$. 

The next result will be used when studying the Cauchy problem for hyperfunctions.

\begin{theorem}\label{th:divCK} {\rm (See~\cite[Cor.~5.4.11]{KS90})}
Assume that $\SSi(F)\cap T^*_NM\subset T^*_MM$. Then the morphism~\eqref{eq:sectNvertN} is an isomorphism.
\end{theorem}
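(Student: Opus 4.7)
The plan is to argue by induction on $d=\codim N$, reducing to the case of a hypersurface, and then to exploit the Mayer--Vietoris triangle associated with the two half-spaces cut out by $N$ together with the definition of microsupport.

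The statement is local on $N$, so I first fix $x_0\in N$ and choose local coordinates $(x_1,\dots,x_n)$ on $M$ near $x_0$ in which $N=\{x_1=\cdots=x_d=0\}$. Setting $N_i=\{x_1=\cdots=x_i=0\}$ gives a filtration $N=N_d\subset N_{d-1}\subset\cdots\subset N_0=M$, with each $N_i$ a hypersurface in $N_{i-1}$. The hypothesis $\SSi(F)\cap T^*_NM\subset T^*_MM$ implies, via the microsupport bound for restrictions already alluded to in the Introduction, that each step $N_i\hookrightarrow N_{i-1}$ is non-characteristic for the successive restriction of $F$ to $N_{i-1}$. Iterating the hypersurface case $d$ times then yields the general statement, the local identifications being compatible with the canonical isomorphism $\ori_{N/M}\simeq\bigotimes_{i=1}^d\ori_{N_i/N_{i-1}}\vert_N$.

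From now on assume $d=1$, write $N=\{\phi=0\}$ with $d\phi\neq 0$ on $N$, and set $M^\pm=\{\pm\phi\geq 0\}$. The short exact sequence of sheaves $0\to\cor_M\to\cor_{M^+}\oplus\cor_{M^-}\to\cor_N\to 0$, after applying $\rhom(\,\cdot\,,F)$, yields the Mayer--Vietoris distinguished triangle
\[
\rsect_NF\to\rsect_{M^+}F\oplus\rsect_{M^-}F\to F\to[+1].
\]
The decisive step is to show $(\rsect_{M^\pm}F)\vert_N=0$. At every point $x_0\in N$ the conormal line $T^*_{N,x_0}M$ is spanned by $d\phi(x_0)$, so the assumption forces $\pm d\phi(x_0)\notin\SSi(F)$. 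The very definition of microsupport, applied to the functions $\phi$ and $-\phi$, then gives $(\rsect_{\{\pm\phi\geq 0\}}F)_{x_0}\simeq 0$, that is, $(\rsect_{M^\pm}F)_{x_0}\simeq 0$. Since $x_0\in N$ was arbitrary, this yields $(\rsect_{M^\pm}F)\vert_N=0$, so restricting the Mayer--Vietoris triangle to $N$ produces an isomorphism $F\vert_N\isoto(\rsect_NF)\vert_N\,[1]$. The trivialization of $\ori_{N/M}$ provided by the defining function $\phi$ identifies this with the morphism~\eqref{eq:sectNvertN}; the orientation twist in that morphism is precisely what makes the resulting isomorphism independent of the choice of $\phi$.

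The step I anticipate as the main obstacle is the inductive reduction: preservation of the non-characteristic condition through successive hypersurface restrictions is not automatic and ultimately rests on the microsupport estimate for restrictions of sheaves. A secondary bookkeeping issue, easily handled but worth care, is to verify that the local isomorphisms produced by each choice of defining function paste consistently into the canonical, orientation-twisted isomorphism asserted in the theorem.
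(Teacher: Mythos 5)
Your hypersurface case is correct and is exactly the paper's argument: the triangle~\eqref{eq:dtNM} (your Mayer--Vietoris triangle restricted to $N$) together with the definition of the microsupport applied to $\pm\phi$ gives $(\rsect_{M^\pm}F)\vert_N\simeq0$ and hence the isomorphism. The gap is in the reduction of the general case to codimension one. Your key claim --- that the hypothesis $\SSi(F)\cap T^*_NM\subset T^*_MM$ forces each step $N_i\hookrightarrow N_{i-1}$ of the flag to be non-characteristic --- is false. The hypothesis constrains $\SSi(F)$ only over points of $N$, whereas non-characteristicity of $N_1\hookrightarrow M$ is a condition at every point of $N_1$ in a neighborhood of $x_0$, and when $d\geq2$ such neighborhoods contain points of $N_1\setminus N$ where the hypothesis says nothing. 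Concretely, take $M=\R^2$, $N=\{0\}$, $F=\cor_Z$ with $Z=\{x_1\geq0\}\cap\{x_2\geq\epsilon\}$: the hypothesis holds (the origin is not in $\Supp F$), yet $N_1=\{x_1=0\}$ is characteristic for $F$ at $(0,\epsilon)$, which can be taken arbitrarily close to $N$. Nor can the estimate of Theorem~\ref{th:hyp1} rescue the induction at the second step: the bound $\SSi(F\vert_{N_1})\subset T^*N_1\cap C_{T^*_{N_1}M}(\SSi(F))$ involves a normal cone computed from the behavior of $\SSi(F)$ at points of $T^*M$ near $T^*_{N_1}M$, including points not lying over $N$, so it is not controlled by the hypothesis; the sharper estimate $\SSi(F\vert_{N_1})\subset\varpi(\SSi(F)\vert_{N_1})$, which would make your second step work, is only available when $N_1$ is non-characteristic --- precisely what fails. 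You flagged this as ``the main obstacle'' but then asserted it ``rests on the microsupport estimate for restrictions''; it does not follow from that estimate.

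This is exactly why the paper treats only the hypersurface case by your method and states that the general case requires Sato's microlocalization functor. The standard proof of \cite[Cor.~5.4.11]{KS90} runs in one step, with no induction: one has $\Supp(\mu_N(F))\subset\SSi(F)\cap T^*_NM$, so the hypothesis forces $\mu_N(F)$ to be supported in the zero-section of $T^*_NM$; Sato's distinguished triangles, which identify $\rsect_NF\vert_N$ and $F\vert_N\tens\ori_{N/M}[-d]$ with the direct image and proper direct image of $\mu_N(F)$ under $\pi\cl T^*_NM\to N$, then degenerate and yield the isomorphism~\eqref{eq:sectNvertN} directly. To repair your write-up you should either invoke that argument for $d\geq2$ or find a genuinely different device; the codimension induction as stated cannot be salvaged.
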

When $N$ is a hypersurface, the proof is obvious by~\eqref{eq:dtNM} since it follows from the definition of the microsupport and
the hypothesis on $F$ that $(\rsect_{M^\pm}F)\vert_N\simeq0$. To treat the general case, one uses the Sato's microlocalization functor (see~\cite{SKK73} or~\cite{KS90}). 

\subsubsection*{Propagation 2}
Let $N\into M$ and $F$ be as above. 
A natural question is to calculate, or at least to give a bound, to the microsupport of the restriction $F\vert_N$ or to $\rsect_NF$. 
The answer is given by 

\begin{theorem}\label{th:hyp1}{\rm (See~\cite[Cor.~6.4.4]{KS90}.)}
One has
\eqn
&&\SSi(\rsect_NF)\subset T^*N\cap C_{T^*_NM}(\SSi(F)),\\
&&\SSi(F\vert_N)\subset T^*N\cap C_{T^*_NM}(\SSi(F)).
\eneqn
\end{theorem}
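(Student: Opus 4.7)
The plan is to reduce both inclusions to a single microsupport estimate for Sato's specialization of $F$ along $N$, using the normal deformation of $M$ to translate the problem into the normal bundle $T_N M$.

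I would begin by introducing the normal deformation $\tilde{M}$ of $M$ along $N$: a manifold equipped with a smooth map $p \colon \tilde{M} \to M$ and a function $t \colon \tilde{M} \to \R$ such that $t^{-1}(0) \simeq T_N M$ and $t^{-1}(\R \setminus \{0\}) \simeq M \times (\R \setminus \{0\})$. From this I define the specialization $\nu_N F \in \Derb(\cor_{T_N M})$ as the nearby fiber at $t = 0^+$ of $p^{-1} F$, and the microlocalization $\mu_N F \in \Derb(\cor_{T^*_N M})$ as the Fourier-Sato transform of $\nu_N F$. Two standard identifications then recover $F|_N$ from $\nu_N F$ (by restriction to the zero section $N \hookrightarrow T_N M$) and $\rsect_N F$ from $\mu_N F$ (via a proper direct image along $\pi \colon T^*_N M \to N$, up to a shift and a twist by $\ori_{N/M}$).

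The core step is the microsupport bound
\[
  \SSi(\nu_N F) \subset C_{T^*_N M}(\SSi(F)),
\]
where the right-hand side is transported into $T^*(T_N M)$ through the Hamiltonian isomorphism~\eqref{eq:normalcotang}. I would prove this by probing $\SSi(\nu_N F)$ with a $\Cd^1$-function $\psi$ on $T_N M$ near a point $v_0$: pulling back through $p$ yields a one-parameter family $\psi_t$ on $M$ whose differentials, after rescaling by $1/t$, converge as $t \to 0^+$ to the data encoded by $d\psi(v_0)$ under the normal/Hamiltonian identification. Hence if $d\psi(v_0)$ avoids $C_{T^*_N M}(\SSi(F))$, the rescaled differentials miss $\SSi(F)$ for all small positive $t$, and the standard half-space vanishing characterizing $\SSi$ passes to the limit.

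Finally, combining this estimate with the restriction and proper-pushforward identifications for $\nu_N F$ and $\mu_N F$, and using the embedding $T^*N \hookrightarrow T_{T^*_N M} T^*M$ of~\eqref{eq:T*MintoT} to cut down to covectors that actually lie in $T^*N$, yields both claimed inclusions. The main obstacle is the microsupport estimate for $\nu_N F$: this is where the geometric content of the normal cone is really used, and it requires careful tracking of how test functions on $T_N M$ lift through the normal deformation with controlled limiting behavior of their differentials. Everything else is formal, flowing from this estimate together with the non-characteristic inverse image bound for microsupport applied to the zero-section embedding.
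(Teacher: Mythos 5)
The paper does not prove this theorem: it quotes it from \cite[Cor.~6.4.4]{KS90}, so your proposal can only be measured against the proof given there. Your overall architecture does match that source: build the normal deformation, reduce everything to a single estimate $\SSi(\nu_NF)\subset C_{T^*_NM}(\SSi(F))$ inside $T^*(T_NM)\simeq T_{T^*_NM}T^*M$ (this is \cite[Th.~6.4.1]{KS90}), transfer it to $\mu_NF$ by the Fourier--Sato transform, and recover $F\vert_N$ and $\rsect_NF$ from $\nu_NF$ and $\mu_NF$. The route is the right one, but the two places where you actually argue are the two places where the sketch does not hold up.

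First, the core estimate is where the entire content of the theorem lives, and your method for it --- lift a test function $\psi$ on $T_NM$ to a family $\psi_t$ on $M$, rescale the differentials by $1/t$, and let the half-space vanishing ``pass to the limit'' --- does not work as stated. Knowing $(\rsect_{\{\psi_t\geq c_t\}}F)_{x_t}\simeq0$ for all small $t>0$ does not formally yield $(\rsect_{\{\psi\geq\psi(v_0)\}}\nu_NF)_{v_0}\simeq0$: one has $\nu_NF\simeq(\roim{j}\opb{\tilde p}F)\vert_{\{t=0\}}$ with $j\cl\{t>0\}\into\tilde M$, so its stalks are colimits of sections of $F$ over shrinking conic neighborhoods, and microsupport estimates do not pass to such limits fiber by fiber; controlling $\rsect_{\{\psi\geq\cdot\}}$ of this colimit is precisely the difficulty. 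The proof in \cite{KS90} instead bounds $\SSi(\roim{j}\opb{\tilde p}F)$ on the deformation space $\tilde M$ itself, using the direct-image estimates for open embeddings of \S 6.3 there together with an explicit normal-cone computation, and only afterwards restricts to the special fibre $\{t=0\}$. Second, your last step invokes the non-characteristic inverse-image bound for the zero-section embedding $N\into T_NM$; but that embedding is in general \emph{not} non-characteristic for $\nu_NF$ (already for $F=\cor_N$ one gets $\nu_NF\simeq\cor_N$ and $\SSi(\nu_NF)=T^*_N(T_NM)$, which meets $T^*_N(T_NM)$ outside the zero section). The correct tool is that $\nu_NF$ and $\mu_NF$ are conic, so restriction to the zero section is computed by a direct image along the fibres and controlled by the special estimates for conic sheaves on vector bundles, together with the compatibility of the Fourier--Sato transform with microsupports (\cite[\S 5.5]{KS90}). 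With these two repairs your plan becomes the standard proof.
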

Recall that $T^*N$ is embedded into $T_{T^*_NM}T^*M$ by~\eqref{eq:T*MintoT} and the normal cone 
$C_{T^*_NM}(\SSi(F))$ is a closed subset of $T_{T^*_NM}T^*M$.

\section{Hyperbolic systems}
References are made to~\cite{KS90}.

In this section we denote by $M$ a real analytic manifold of dimension $n$ and by $X$ a complexification of $M$. 
When necessary, we shall identify the complex manifold $X$ with  the real underlying manifold to $X$. 

\subsubsection*{Hyperfunctions}
We have the sheaves
\eq\label{eq:sato1}
&&\sha_M=\sho_X\vert_M,\quad \shb_M=H^n_M(\sho_X)\tens\ori_M.
\eneq
Here, $\ori_M$ is the orientation sheaf on $M$. The sheaf $\sha_M$ is the sheaf of real analytic 
functions on $M$ and the sheaf $\shb_M$ is the sheaf of Sato's hyperfunctions on $M$ (\cite{Sa59}). It is a flabby sheaf and it contains the sheaf of distributions on $M$ as a subsheaf. Moreover, the cohomology objects $H^j_M(\sho_X)$ are zero for
 $j\not=n$ and therefore we may better write
 \eq\label{eq:sato2}
 &&\shb_M= \rsect_M(\sho_X)\tens\ori_M\,[n].
 \eneq
This will be essential in the proofs of Theorems~\ref{th:hyp3} and~~\ref{th:hyp4} below.

\subsubsection*{Propagation for hyperbolic systems}

\begin{definition}
Let $\shm$ be a coherent left $\shd_X$-module.
We set
\eqn
&&\hchv_M(\shm)=T^*M\cap C_{T^*_MX}(\chv(\shm))
\eneqn
and call $\hchv_M(\shm)$ the hyperbolic characteristic variety of $\shm$ along $M$. 
A vector $\theta\in T^*M$ such that $\theta\notin\hchv_M(\shm)$ is called hyperbolic with respect to $\shm$. 
In case $\shm=\shd_X/\shd_X\cdot P$ for a differential operator $P$, one says that $\theta$ is hyperbolic for $P$.
\end{definition}
\begin{example}
Assume we have a local coordinate system $z=x+\sqrt{-1}y$ and $M=\{y=0\}$. Denote by $(z;\zeta)$ the symplectic coordinates on $T^*X$ with $\zeta=\xi+\sqrt{-1}\eta$. Let $(x_0;\theta_0)\in T^*M$ with $\theta_0\not=0$. Let $P$ be a differential operator with principal symbol 
$\sigma(P)$. Applying the definition of the normal cone, we find that $(x_0;\theta_0)$ is hyperbolic for $P$ 
 if and only if 
\eq\label{eq:thetahyp}
&&\left\{\parbox{65ex}{
there exist an open neighborhood $U$ of $x_0$ in $M$ and an open  conic neighborhood $\gamma$ of 
 $\theta_0\in\R^n$ such that 
 $\sigma(P)(x;\theta+\sqrt{-1}\eta)\not=0$ 
 for all $\eta\in\R^n$, $x\in U$ and $\theta\in\gamma$.
 }\right.
 \eneq
As noticed by M.~Kashiwara, it follows from the local Bochner's tube theorem that  
condition~\eqref{eq:thetahyp}
will be satisfied as soon as  $\sigma(P)(x;\theta_0+\sqrt{-1}\eta)\not=0$ for all $\eta\in\R^n$ and $x\in U$ (see~\cite{BS73}). Hence, one recovers the classical notion of a (weakly) hyperbolic operator (see~{Le53}).
 \end{example}
 
\begin{theorem}\label{th:hyp3}
Let $\shm$ be a coherent $\shd_X$-module. Then
\eqn
&&\SSi(\rhom[\shd_X](\shm,\shb_M))\subset\hchv_M(\shm).
\eneqn
The same result holds with $\sha_M$ instead of $\shb_M$.
\end{theorem}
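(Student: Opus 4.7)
The plan is to reduce the statement to the microsupport bound of Theorem~\ref{th:hyp1} (the restriction/section version) applied to the holomorphic solution complex $\Sol(\shm)$, whose microsupport is computed by Theorem~\ref{th:ssinchar1}.

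First I would rewrite the hyperfunction solution complex in a form to which Theorem~\ref{th:hyp1} applies. Using the presentation~\eqref{eq:sato2} of $\shb_M$, I would compute
\eqn
&&\rhom[\shd_X](\shm,\shb_M)\simeq \rhom[\shd_X](\shm,\rsect_M\sho_X)\tens\ori_M\,[n]\\
&&\hs{4em}\simeq \rsect_M\rhom[\shd_X](\shm,\sho_X)\tens\ori_M\,[n]
   \simeq \rsect_M(\Sol(\shm))\tens\ori_M\,[n].
\eneqn
Here the key point is that $\rsect_M$ (a functor on sheaves of $\cor$-vector spaces on $X$ regarded as a real manifold) commutes with $\rhom[\shd_X]$, because $M$ is a closed real submanifold of $X$ and $\shd_X$-module structures pass through sheaf-theoretic operations. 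Tensoring with the locally constant rank one sheaf $\ori_M[n]$ does not alter the microsupport.

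Next I would apply Theorem~\ref{th:hyp1} to $F=\Sol(\shm)$ and $N=M\subset X$ (viewing $X$ as a real manifold of real dimension $2n$). This yields
\eqn
&&\SSi\bl\rsect_M(\Sol(\shm))\br\subset T^*M\cap C_{T^*_MX}\bl\SSi(\Sol(\shm))\br.
\eneqn
Invoking Theorem~\ref{th:ssinchar1}, the right-hand side is exactly
$T^*M\cap C_{T^*_MX}(\chv(\shm))=\hchv_M(\shm)$, proving the bound for $\shb_M$. For the analogue with $\sha_M=\sho_X\vert_M$, I would argue identically, replacing $\rsect_M(\Sol(\shm))$ by $\Sol(\shm)\vert_M$ and using the second inclusion of Theorem~\ref{th:hyp1} (the bound on $\SSi(F\vert_N)$).

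The main subtlety, and the step I would check most carefully, is the identification $\rhom[\shd_X](\shm,\rsect_M\sho_X)\simeq \rsect_M\rhom[\shd_X](\shm,\sho_X)$. This requires interpreting $\rsect_M$ as a functor applied to the sheaf of $\cor$-vector spaces underlying $\rhom[\shd_X](\shm,\sho_X)$, together with the fact that the $\shd_X$-module structure on $\sho_X$ restricts compatibly under $\rsect_M$. Once this commutation is established, the microsupport estimates are a direct combination of the two theorems cited, and no further analytic input is needed beyond the Cauchy-Kowalevsky theorem already embedded in Theorem~\ref{th:ssinchar1}.
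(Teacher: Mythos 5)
Your proposal is correct and follows essentially the same route as the paper: the paper's own proof likewise combines Theorem~\ref{th:hyp1} applied to $F=\Sol(\shm)$ with $N=M\subset X$, the commutation isomorphisms $\rsect_M\rhom[\shd_X](\shm,\sho_X)\simeq\rhom[\shd_X](\shm,\rsect_M\sho_X)$ and $\rhom[\shd_X](\shm,\sho_X)\vert_M\simeq\rhom[\shd_X](\shm,\sho_X\vert_M)$, and Theorem~\ref{th:ssinchar1}. Your writeup merely makes explicit the use of~\eqref{eq:sato2} and the invariance of the microsupport under tensoring with $\ori_M[n]$, which the paper leaves implicit.
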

\begin{proof}
This follows from Theorem~\ref{th:hyp1} and the isomorphisms
\eqn
&&\rsect_M\rhom[\shd_X](\shm,\sho_X)\simeq\rhom[\shd_X](\shm,\rsect_M\sho_X),\\
&&\rhom[\shd_X](\shm,\sho_X)\vert_M\simeq\rhom[\shd_X](\shm,\sho_X\vert_M).
\eneqn
\end{proof}

\subsubsection*{Cauchy problem for hyperbolic systems}
We consider the following situation: $M$ is a real analytic manifold of
dimension $n$, $X$ is a complexification of $M$, 
$N\into M$ is a real analytic smooth closed submanifold of $M$ of codimension
$d$ and $Y\into X$ is a complexification of $N$ in $X$. 

\begin{theorem}\label{th:hyp4}
Let $M,X,N,Y$ be as above and let $\shm$ be a coherent $\shd_X$-module.
We assume 
\eq\label{hyp:nonhyp}
&& T^*_NM\cap\hchv_M(\shm)\subset T^*_MM. 
\eneq
In other words, any non zero vector $\theta\in T^*_NM$ is hyperbolic for
$\shm$. Then 
$Y$ is non characteristic for $\shm$ in a neighborhood of $N$ and
the  isomorphism~\eqref{eq:CKK} induces the isomorphism
\eq\label{eq:CKhyp}
&&\rhom[\shd_X](\shm,\shb_M)\vert_N\isoto \rhom[\shd_Y](\shm_Y,\shb_N).
\eneq
\end{theorem}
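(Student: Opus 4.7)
The plan is to set $F\eqdot\rhom[\shd_X](\shm,\sho_X)$, so that $\SSi(F)=\chv(\shm)$ by Theorem~\ref{th:ssinchar1}, re-express both sides of~\eqref{eq:CKhyp} in terms of $F$ using~\eqref{eq:sato2}, and combine Theorems~\ref{th:divCK}, \ref{th:hyp1} and \ref{th:CKK}.

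I would first verify that $Y$ is non-characteristic for $\shm$ in a neighborhood of $N$. Suppose otherwise: there is $(x_0;\zeta_0)\in\chv(\shm)\cap T^*_YX$ with $x_0\in N$ and $\zeta_0\ne 0$. Choose local coordinates $z=x+\sqrt{-1}\,y=(z',z'')$ on $X$ with $M=\{y=0\}$, $Y=\{z'=0\}$ and $N=Y\cap M$, and write $\zeta_0=(\zeta_0',0)$ with $\zeta_0'=\xi_0'+\sqrt{-1}\,\eta_0'\in\C^d\setminus\{0\}$. Since $\chv(\shm)$ is $\C^\times$-conic, replacing $\zeta_0$ by $\mu\zeta_0$ for suitable $\mu\in\C^\times$ (take $\mu=-\sqrt{-1}$ if $\xi_0'=0$) I may assume $\xi_0'\ne 0$. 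The sequence $\{(x_0;\zeta_0/k)\}_k\subset\chv(\shm)$ then converges to $(x_0;0)\in T^*_MX$, and rescaling by $\lambda_k=k$ in the definition of $C_{T^*_MX}(\chv(\shm))$ and using the analogue for $M\into X$ of the embedding~\eqref{eq:T*MintoT} exhibits $\xi_0'\,dx'\in T^*_NM$ as a nonzero element of $\hchv_M(\shm)$, contradicting~\eqref{hyp:nonhyp}. Therefore Theorem~\ref{th:CKK} applies and~\eqref{eq:CKK} is an isomorphism near $N$.

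For the left-hand side of~\eqref{eq:CKhyp}, set $G\eqdot\rsect_M F$, so that $\rhom[\shd_X](\shm,\shb_M)\simeq G\tens\ori_M[n]$. Theorem~\ref{th:hyp1} applied to $F$ and $M\into X$, combined with Theorem~\ref{th:ssinchar1}, gives $\SSi(G)\subset\hchv_M(\shm)$, so~\eqref{hyp:nonhyp} makes Theorem~\ref{th:divCK} applicable to $G$ along $N\subset M$: $G\vert_N\isoto\rsect_N G\tens\ori_{N/M}[d]$. Since $\rsect_N G=\rsect_N F$ (as $N\subset M$) and $\ori_{N/M}\tens\ori_M\vert_N\simeq\ori_N$, I obtain
\[
\rhom[\shd_X](\shm,\shb_M)\vert_N\;\simeq\;\rsect_N F\tens\ori_N[n+d].
\]
For the right-hand side,~\eqref{eq:CKK} established above together with~\eqref{eq:sato2} applied to $N\into Y$ (recall $\dim_\R N=n-d$) gives $\rhom[\shd_Y](\shm_Y,\shb_N)\simeq\rsect_N(F\vert_Y)\tens\ori_N[n-d]$. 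To reconcile the two, I apply Theorem~\ref{th:divCK} a second time, now to $F$ along $Y\into X$: $Y$ is non-characteristic for $F$ by the previous paragraph, and since $Y$ is a complex submanifold of real codimension $2d$ with canonically trivial $\ori_{Y/X}$, Theorem~\ref{th:divCK} yields $F\vert_Y\isoto\rsect_Y F[2d]$, hence $\rsect_N(F\vert_Y)\simeq\rsect_N F[2d]$. Substituting makes the two expressions coincide, and since all identifications used are induced by~\eqref{eq:CKK} and the natural morphism~\eqref{eq:sectNvertN}, the composite is~\eqref{eq:CKhyp}.

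The most delicate point is the non-characteristic argument: translating~\eqref{hyp:nonhyp}, which concerns the real conormal $T^*_NM$, into the complex non-characteristic condition on $T^*_YX$. This reduction relies on the $\C^\times$-invariance of the complex analytic variety $\chv(\shm)$, with the rotation $\mu=-\sqrt{-1}$ being precisely what handles purely imaginary $\zeta_0'$. Thereafter the work is essentially bookkeeping with orientation sheaves and degree shifts, where the real codimension $2d$ of $Y\into X$ must combine correctly with the real codimension $d$ of $N\into M$ so that the $[n+d]$ shift on the $M$-side matches $[n-d]+[2d]$ on the $Y$-side.
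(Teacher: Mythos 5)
Your proposal is correct and follows essentially the same route as the paper: the same $\C^\times$-conicity/normal-cone argument for the non-characteristicity of $Y$, and the same chain of isomorphisms combining Theorems~\ref{th:ssinchar1}, \ref{th:divCK}, \ref{th:hyp1} and~\ref{th:CKK}, merely organized from both ends of~\eqref{eq:CKhyp} rather than as a single chain. Your degree and orientation bookkeeping ($[n+d]$ on the $M$-side versus $[n-d]+[2d]$ on the $Y$-side) matches the paper's computation.
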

\begin{proof}
(i) Since $X$ is a complexification of $M$, there is an isomorphism
$M\times_XT^*X\simeq T^*_MX\oplus_M T^*M$. Moreover, there is a natural
embedding $T^*_MX\oplus_M T^*M\hookrightarrow T_{T^*_MX}T^*X$
(see~\cite[\S~6.2]{KS90}). Then hypothesis~\eqref{hyp:nonhyp}
implies $T^*_NM\cap M\times_X\chv(\shm)\subset T^*_XX$ and since $\chv(\shm)$
is $\C^\times$-conic,  $N\times_YT^*_YX\cap\chv(\shm)\subset T^*_XX$. Hence,
$Y$ is non characteristic for $\shm$. 

\vspace{0.2ex}\noindent
(ii) We have the chain of isomorphisms
\eqn
\rhom[\shd_X](\shm,\shb_M)\vert_N&\simeq&\rsect_N\rhom[\shd_X](\shm,\shb_M)\tens\ori_{N/M}\,[d]\\
&\simeq&\rsect_N\rhom[\shd_X](\shm,\rsect_M\sho_X)\tens\ori_{N}\,[n+d]\\
&\simeq&\rsect_N\rhom[\shd_X](\shm,\rsect_Y\sho_X)\tens\ori_{N}\,[n+d]\\
&\simeq&\rsect_N\rhom[\shd_X](\shm,\sho_X)\vert_Y\tens\ori_{N}\,[n-d]\\
&\simeq&\rsect_N\rhom[\shd_Y](\shm_Y,\sho_Y)\tens\ori_{N}\,[n-d]\\
&\simeq&\rhom[\shd_Y](\shm_Y,\shb_N).
\eneqn
Here, the first isomorphism follows from Theorems~\ref{th:hyp3} and~\ref{th:divCK}, the second uses the definition of the sheaf $\shb_M$, 
the third is obvious since $N$ is both contained in $M$ and in $Y$, the fourth follows from 
Theorems~\ref{th:ssinchar1} and~\ref{th:divCK},
the fifth is Theorem~\ref{th:CKK} and the last one uses the definition of the sheaf $\shb_N$.
\end{proof}
Consider for simplicity the case where $\shm=\shd_X/\shi$ where $\shi$ is a coherent left ideal of $\shd_X$. A section 
$u$ of $\hom[\shd_X](\shm,\shb_M)$ is a hyperfunction $u$ such that $Qu=0$ for all $Q\in\shi$. 
It follows that the analytic wave front set of $u$ does not intersect $T^*_YX\cap T^*_MX$ and this implies that the restriction of 
$u$ (and its derivative) to $N$ is well-defined as a hyperfunction on $N$. One can show that the morphism~\eqref{eq:CKhyp} is then obtained using this restriction morphism, similarly as in Theorem~\ref{th:CKK}. Since we do not recall the Sato's microlocalization and the notion of  wave front set in this paper,  we do not explain this point.

Note that Theorem~\ref{th:hyp3} and~\ref{th:hyp4} were first obtained in~\cite{BS73} in case of a single differential operator and in~\cite{KS79} in the more general situation of a systems of microdifferential operators acting on microfunctions.

\section{Global propagation on causal manifolds}
There is a vast literature on  Lorentzian manifolds (for an exposition, see {\em e.g.,} the book~\cite{BGP07}) but we shall restrict ourselves to recall a  global propagation theorem of~\cite{DS99} and give applications. 

\subsubsection*{Global propagation for sheaves}
For a manifold $M$ we denote by $q_1$ and $q_2$ the first and second projection defined on $M\times M$, by 
 $q_{ij}$ the $(i,j)$-th projection   defined on  $M\times M\times M$ and similarly on  $M\times M\times M\times M$. We denote by $\Delta_M$ the diagonal of $M\times M$.
 
 A cone $\lambda$ in a vector bundle $E\to M$ is a subset of $E$ which is invariant by the action of $\R^+$ on this vector bundle. 
We denote by $\lambda^a$ the opposite cone to $\lambda$, that is, $\lambda^a=-\lambda$  and by $\lambda^\circ$ the polar cone to $\lambda$, a closed convex cone of the dual vector bundle
\eqn
&&\lambda^\circ=\{(x,\xi)\in E^*;\langle \xi,v\rangle\geq0\mbox{ for all }v\in\lambda\}.
\eneqn

In all this section, we assume that $M$ is connected. 
\begin{definition}\label{def:Zproper}
Let $Z$ be a closed subset of $M\times M$ and let $A$ be a closed subset of $M$. We say that 
$A$ is $Z$-proper if $q_1$ is proper on $Z\cap\opb{q_2}(A)$.
\end{definition}

\begin{definition}{\rm (See~\cite[Def.~1.2]{DS99}.)}\label{def:propagator}
A convex propagator $(Z,\lambda)$ is the data of a closed subset $Z$ of $M\times M$ and 
 a closed convex proper cone $\lambda$ of $T^*M$ satisfying
\eq\label{hyp:causal2}
&&\left\{ \parbox{65ex}{
(i) $\Delta_M\subset Z$, \\
(iii) $\SSi(\cor_Z)\cap (T^*M\times T^*_MM\cup T^*_MM\times T^*M)\subset T^*_{M\times M}M\times M$,\\
(iii) $\SSi(\cor_Z)\subset T^*M\times\lambda$.
}\right.
\eneq
\end{definition}

\begin{theorem}{\rm (See~\cite[Cor.~1.4]{DS99}.)}\label{th:DSpropa}
Let $(Z,\lambda)$ be a convex propagator and let $A$ be a $Z$-proper closed subset of $M$ with $A\not=M$. 
Let $F\in\Derb(\cor_M)$ and assume that $\SSi(F)\cap\lambda^a\subset T^*_MM$ and $\SSi(\cor_A)\subset\lambda^a$.
Then $\rsect_A(M;F)\simeq0$.
\end{theorem}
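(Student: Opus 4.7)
My strategy is to use the kernel $\cor_Z$ on $M\times M$ as a propagation device: under the microsupport hypothesis on $F$, convolution with $\cor_Z$ should leave $F$ invariant, yet it spreads the support of $\rsect_AF$ beyond $A$ along the order induced by $\lambda$. Since $A\ne M$ and $M$ is connected, this spreading contradicts the $A$-support unless the cohomology vanishes. Concretely, I first reformulate: from $\rsect_A(M;F)\simeq\rsect(M;\rsect_AF)$ and the distinguished triangle
$$\rsect_AF\to F\to \RR j_*j^{-1}F\to[+1],$$
with $j\cl M\setminus A\hookrightarrow M$, the statement becomes the assertion that the restriction $\rsect(M;F)\to\rsect(M\setminus A;F)$ is an isomorphism. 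Since $M$ is connected and $A\ne M$, $U\seteq M\setminus A$ is non-empty.

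Next I introduce the convolution functor $\Phi(G)\seteq \RR q_{1!}(\cor_Z\ltens q_2^{-1}G)$ on $\Derb(\cor_M)$. The closed inclusion $\Delta_M\subset Z$ gives a surjection $\cor_Z\to\cor_{\Delta_M}$ and, via $q_1|_{\Delta_M}=\id_M$, a canonical morphism $\Phi(G)\to G$. Conditions~(ii)--(iii) of Definition~\ref{def:propagator}, combined with the microsupport estimates for direct and inverse images of~\cite[\S~5.4]{KS90}, ensure that $\Phi$ is well-defined on $\Derb(\cor_M)$ and that composition with $\cor_Z$ only introduces microsupport in $\lambda$-directions. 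When $\SSi(G)\cap\lambda^a\subset T^*_MM$, the cone of $\Phi(G)\to G$ (supported on $q_1(Z\setminus\Delta_M)$) must therefore have its microsupport forced into $\lambda\cap\lambda^a=T^*_MM$ by properness of $\lambda$, which combined with the support condition should give the isomorphism $\Phi(F)\isoto F$.

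Finally, apply $\rsect_A(M;-)$ and invoke the $Z$-proper hypothesis. Since $q_1$ is proper on $Z\cap q_2^{-1}(A)$, one identifies $\rsect_A(M;\Phi(F))\simeq\rsect_{A'}(M;F)$ with $A'\seteq q_1(Z\cap q_2^{-1}(A))\supset A$. Combined with $\Phi(F)\simeq F$ from the previous step, this yields $\rsect_A(M;F)\simeq\rsect_{A'}(M;F)$. Iterating, and exploiting that the order relation $\preceq$ induced by $\lambda$ is closed and proper, the chain $A\subset A'\subset A''\subset\cdots$ must eventually escape any proper subset of $M$ by connectedness of $M$; since $A\ne M$, this forces $\rsect_A(M;F)\simeq 0$. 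The main obstacle is the microsupport chase that confirms $\Phi(F)\isoto F$: it hinges delicately on the properness of $\lambda$ (so that $\lambda\cap\lambda^a=T^*_MM$) and on translating condition~(iii) on $\SSi(\cor_Z)$ into a non-characteristic condition for the sheaf-theoretic composition with $F$.
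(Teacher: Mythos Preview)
First, note that the paper itself does not prove this theorem; it simply records the statement with a reference to~\cite[Cor.~1.4]{DS99}. Your strategy of using $\cor_Z$ as a kernel and comparing it with $\cor_{\Delta_M}$ is indeed the idea behind the argument in~\cite{DS99}, but the execution here has real gaps.

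The main problem is Step~2, the claimed isomorphism $\Phi(F)\isoto F$ of sheaves on $M$. To extract a microsupport bound for $\Phi(F)=\RR q_{1!}(\cor_Z\otimes q_2^{-1}F)$, or for the cone of $\Phi(F)\to F$, one needs $q_1$ to be proper on the relevant support, namely on $Z$; nothing in the hypotheses provides this. The only properness assumed is that of $q_1$ on $Z\cap q_2^{-1}(A)$, which is of no use before $A$ enters the computation. In~\cite{DS99} one does \emph{not} establish a sheaf-level isomorphism $\Phi(F)\simeq F$; instead one works directly with $\rsect_A(M;F)=\RHom(\cor_A,F)$, passes to $M\times M$, and compares $\cor_{\Delta_M}$ with $\cor_Z$ \emph{after} bringing $\cor_A$ into the picture. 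At that point the $Z$-properness of $A$ supplies exactly the finiteness needed, and the hypothesis $\SSi(\cor_A)\subset\lambda^a$---which you never visibly use---combines with condition~(iii) of Definition~\ref{def:propagator} and with $\SSi(F)\cap\lambda^a\subset T^*_MM$ to give the non-characteristic condition on $M\times M$ that makes the comparison an isomorphism.

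Two further issues. Your Step~3 identity $\rsect_A(M;\Phi(F))\simeq\rsect_{A'}(M;F)$ is not justified: there is no projection formula converting $\rhom(\cor_A,\RR q_{1!}(-))$ into something of the form $\rhom(\cor_{A'},-)$; you appear to be conflating $\rsect_A(-)$ with the functor $(-)\otimes\cor_A$. And the iteration in Step~4 presupposes transitivity of the relation encoded by $Z$, i.e.\ $q_{13}(q_{12}^{-1}Z\cap q_{23}^{-1}Z)\subset Z$, which is \emph{not} part of Definition~\ref{def:propagator}; that axiom appears only later, for causal manifolds, in~\eqref{hyp:Zlambda}. In the correct argument no iteration is needed: a single kernel comparison on $M\times M$, together with $A\ne M$ and the connectedness of $M$, already forces the vanishing.
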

Note that the conclusion of the theorem is equivalent to saying that we have the isomorphism 
$\rsect(M;F)\isoto\rsect(M\setminus A;F)$. Roughly speaking, the ``sections'' of $F$ on $M\setminus A$ extend uniquely to $M$. 

\subsubsection*{Causal manifolds}

In the literature, one often encounters  time-orientable Lorentzian manifolds to which one can associate a cone in $TM$ or 
its polar cone in $T^*M$.  Here, we only assume that:
\eq\label{hyp:lorentz}
&&\left\{\parbox{65ex}{
$M$ is a smooth real connected manifold and 
we are given a closed convex proper cone $\lambda$ in $T^*M$ 
such that for each $x\in $M$, \Int(\lambda_x)\not=\emptyset$.
}\right.
\eneq

\begin{definition}
A $\lambda$-path is a continuous piecewise  $C^1$-curve $\gamma\cl [0,1]\to M$ such that its derivative 
$\gamma'(t)$ satisfies $\langle \gamma'(t),v\rangle\geq0$ for all $t\in[0,1]$ and $v\in\lambda$. 
Here $\gamma'(t)$ means as well the right or the left derivative, as soon as it exists (both exist on $]0,1[$ and are almost everywhere the same, and $\gamma'_r(0)$ and $\gamma'_l(1)$ exist).
\end{definition}
To $\lambda$ one associates a preorder on $M$ as follows: $x\preceq y$ if and only if there exists a $\lambda$-path
$\gamma$ such that $\gamma(0)=x$ and $\gamma(1)=y$. 

For a subset $A$ of $M$, we set:
\eqn
\pas A &=& \{x\in M;\mbox{ there exists } y\in A, x\preceq y\},\\
\fut A &=& \{x\in M;\mbox{ there exists } y\in A, y\preceq x\}.
\eneqn
We shall assume:
\eq\label{hyp:preorder}
&&\left\{\parbox{65ex}{
the relation $\preceq$ is closed and proper, that is, \\
(i) if $\{(x_n,y_n)\}_n $ is a sequence which converges to $(x,y)$  
and $x_n\preceq y_n$ for all $n$, then $x\preceq y$,\\
(ii) for two compact sets $A$ and $B$, the set $\fut B \cap \pas A$ is compact.
}\right.
\eneq
\begin{definition}
A pair $(M,\lambda)$ with $\lambda\subset T^*M$ satisfying~\eqref{hyp:lorentz} and~\eqref{hyp:preorder} will be called here  
{\em a causal manifold}.
\end{definition}
Note that if $(M,\lambda)$ is a causal manifold, then so is $(M,\lambda^a)$.

One denotes by $Z_\lambda$ the set of $M\times M$ associated with the preorder:
\eqn
&&Z_\lambda=\{(x,y)\in M\times M;x\preceq y\}.
\eneqn
Note that giving a relation $\preceq$ satisfying~\eqref{hyp:preorder} is equivalent to giving $Z_\lambda$ satisfying:
\eq\label{hyp:Zlambda}
&&\left\{\parbox{65ex}{
$\Delta_M\subset Z_\lambda$,\\
$q_{13}(\opb{q_{12}}Z_\lambda\cap \opb{q_{12}}Z_\lambda)\subset Z_\lambda$,\\
$Z_\lambda$ is closed,\\
$q_{13}$ is proper on $\opb{q_{12}}Z_\lambda\cap \opb{q_{23}}Z_\lambda$. 
}\right.
\eneq
Note that for a closed subset $A$ of $M$
\bnum
\item
$\pas A = q_{1}(Z_\lambda\cap \opb{q_{2}} A)$ and $\fut A = q_{2}(Z_\lambda\cap \opb{q_{1}} A)$,
\item
if $A$ is compact, the map $q_1$ is proper on $Z_\lambda\cap\opb{q_2}A$ (since $Z_\lambda$ is closed),
\item
if $A$ is compact, then $\pas A$ is closed (by (i) and (ii)),
\item
for two compact sets $A$ and $B$, the set $(\fut B \times \pas A)\cap Z_\lambda$ is compact (indeed, this set
is contained in $(\fut B \cap \pas A)\times(\fut B \cap \pas A)$),
\item
$A$ is $Z_\lambda$-proper if and only if, for any compact set $B$, the set
$\fut B\cap A$ is compact. In particular, if $A$ is compact, then $\pas A$ is $Z_\lambda$-proper.
\enum

\begin{proposition}{\rm (See~\cite[Prop.~4.4]{DS99}.)}
Let $(M,\lambda)$ be a causal manifold. Then 
\banum
\item
$(Z_\lambda,\lambda)$ is a convex propagator,
\item
if $A$ is a closed subset satisfying $\pas A =A$, then $\SSi(\cor_A)\subset\lambda^a$.
\eanum
\end{proposition}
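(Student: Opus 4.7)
The plan is to verify the three axioms of Definition~\ref{def:propagator} directly for (a), and to estimate the tangent cone to $A$ at its boundary for (b). The unifying tool is the standard principle from \cite{KS90} that, for a closed subset $Z$ of a manifold, $\SSi(\cor_Z)$ at $p\in Z$ is contained in the polar cone of the tangent cone $C_p(Z)$ at $p$; Example~\ref{ex:microsupp}(iii) is the smooth model case.

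Condition \eqref{hyp:causal2}(i) is immediate: constant $\lambda$-paths give reflexivity of $\preceq$, hence $\Delta_M\subset Z_\lambda$. For \eqref{hyp:causal2}(iii) I would establish the inclusion $(-\lambda^\circ_{x_0})\times\lambda^\circ_{y_0}\subset C_{(x_0,y_0)}(Z_\lambda)$ at each $(x_0,y_0)\in Z_\lambda$ by splicing: given a $\lambda$-path $\gamma$ witnessing $x_0\preceq y_0$ and vectors $v_1\in-\lambda^\circ_{x_0}$, $v_2\in\lambda^\circ_{y_0}$, I prepend the straight segment from $x_0+\epsilon v_1$ to $x_0$ (itself a $\lambda$-path since its derivative $-v_1$ lies in $\lambda^\circ_{x_0}$) and append the segment from $y_0$ to $y_0+\epsilon v_2$, all in a local chart. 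Taking polar cones gives $\SSi(\cor_{Z_\lambda})\subset\lambda^a\times\lambda$, which in particular is \eqref{hyp:causal2}(iii).

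For \eqref{hyp:causal2}(ii) a sharper bound is required, which I would obtain by adding diagonal vectors to the tangent cone. At a diagonal point $(x_0,x_0)$ the curve $t\mapsto(x_0+tv,x_0+tv)$ lies in $\Delta_M\subset Z_\lambda$, so $(v,v)\in C_{(x_0,x_0)}(Z_\lambda)$ for every $v\in T_{x_0}M$; away from the diagonal, translating $\gamma$ by a constant vector in local coordinates yields the same conclusion (using smoothness of the cone bundle $\lambda$ and, if $\gamma'$ touches $\partial\lambda^\circ$, a small perturbation into its interior). The polar then acquires the extra constraint $\xi_0+\eta_0=0$, so $\SSi(\cor_{Z_\lambda})\subset\{(\xi_0,\eta_0):\xi_0=-\eta_0,\ \eta_0\in\lambda\}$; since in this set $\xi_0=0$ iff $\eta_0=0$, \eqref{hyp:causal2}(ii) follows.

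For (b), the hypothesis $\pas A=A$ implies that for any $x_0\in A$ and $v\in\lambda^\circ_{x_0}$ the linear segment $s\mapsto x_0-\epsilon v+s\epsilon v$ is a $\lambda$-path showing $x_0-\epsilon v\preceq x_0$, whence $x_0-\epsilon v\in\pas A=A$ for small $\epsilon\geq 0$; thus $-\lambda^\circ_{x_0}\subset C_{x_0}(A)$. Its polar is $\lambda^a_{x_0}$, and the tangent-cone-to-microsupport principle gives $\SSi(\cor_A)\subset\lambda^a$. The main technical obstacle throughout will be justifying that principle for closed sets whose boundary is not smooth (a standard but nontrivial consequence of the microsupport calculus of \cite{KS90}), together with the error control needed for the translation-of-$\lambda$-path argument in (a)(ii) on a manifold where $\lambda$ varies smoothly with the base point.
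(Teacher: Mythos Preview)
Your overall strategy---produce a cone of directions along which the set is stable and then pass to the polar---is exactly the idea behind the paper's proof, but the specific principle you invoke is not correct. The claim that for a closed subset $Z$ one has $\SSi(\cor_Z)_p\subset C_p(Z)^\circ$ (polar of the tangent cone) already fails in $\R^2$: take $A=\{(x,y):y\geq-\vert x\vert\}$. At the origin $C_0(A)=A$ is a cone spanning $\R^2$, so its polar is $\{0\}$; yet $\SSi(\cor_A)_0$ contains the rays through $(1,1)$ and $(-1,1)$ (limits of the conormals along the two smooth boundary branches) and is therefore not reduced to $\{0\}$. Hence the step ``take polar cones'' in your arguments for~\eqref{hyp:causal2}(ii), \eqref{hyp:causal2}(iii) and for (b) is unjustified; this is not a standard consequence of \cite{KS90}.

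The tool the paper actually uses is the \emph{strict normal cone} $N(A)$ of \cite[Def.~5.3.6]{KS90}: in local coordinates, $(x_0;v_0)\in N(A)$ means there exist an open cone $\gamma\ni v_0$ and a neighborhood $U\ni x_0$ with $U\cap\bl(A\cap U)+\gamma\br\subset A$. The valid bound is $\SSi(\cor_A)\subset N(A)^\circ$ (\cite[Prop.~5.3.8]{KS90}). One always has $N(A)_p\subset C_p(A)$, so $N(A)^\circ\supset C_p(A)^\circ$, which explains why your inclusion is too optimistic. The good news is that your $\lambda$-path constructions are essentially what is needed for the strict normal cone: in (b), the observation that $x_0-\epsilon v\in A$ for every $x_0\in A$ and $v\in\lambda^\circ_{x_0}$ upgrades---once you take $v\in\Int(\lambda^\circ_{x_0})$ and use that the construction works uniformly for $x$ in a neighborhood of $x_0$ and for $v'$ in a small open cone around $v$---to the inclusion $\Int(\lambda^{\circ a})\subset N(A)$, whence $N(A)^\circ\subset\lambda^a$. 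The proof of (a) is analogous on $M\times M$. The technical care you anticipated (controlling the variation of $\lambda$ along short paths) is precisely what is required to pass from ``one direction at one point'' to ``an open cone of directions in a neighborhood'', i.e.\ from $C_p(A)$ to $N(A)$.
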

In particular, if $A$ is a closed subset such that $\pas A=\fut A$, then $\SSi(\cor_A)\subset T^*_MM$ and therefore 
$A=\emptyset$  or $A=M$. 
\begin{proof}[Sketch of proof]
To a set $A\subset M$, one associates its strict normal cone $N(A)$  (\cite[Def.~5.3.6]{KS90}), an open convex cone of $TM$.  In a local coordinate system, 
$(x_0;v_0)\in N(A)$ if and only if there exists an open cone $\gamma$ containing $v_0$ and an open neighborhood $U$ of $x_0$ such that 
\eqn
&&U\cap\bl(A\cap U)+\gamma\br\subset A.
\eneqn
One shows that the hypothesis $\pas A =A$ implies that $\Int(\lambda^{\circ a})\subset N(A)$. 
Then the proof  of (b) follows from the inclusion 
$\SSi(\cor_A)\subset N(A)^\circ$ (\cite[Prop.~5.3.8]{KS90}). 

The proof of (a) is similar.
\end{proof}

We can reformulate Theorem~\ref{th:DSpropa} as follows.
\begin{theorem}\label{th:DSpropa2} 
Let $(M,\lambda)$ be a causal manifold. 
Let $A$ be a  closed subset of $M$ such that $A=\pas A$,  $A\not=M$ and for any compact subset $B$ of $M$, the set $\fut B\cap A$ is compact. 
Let $F\in\Derb(\cor_M)$ and assume that $\SSi(F)\cap\lambda^a\subset T^*_MM$.
Then $\rsect_A(M;F)\simeq0$.
\end{theorem}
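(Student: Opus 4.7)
The plan is to apply Theorem~\ref{th:DSpropa} to the pair $(Z_\lambda,\lambda)$ associated with the causal structure on $M$, and verify the hypotheses of that theorem one by one. The work is almost entirely bookkeeping, since the preceding Proposition does the main microlocal computations for us.

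First, I would invoke part~(a) of the Proposition to conclude that $(Z_\lambda,\lambda)$ is a convex propagator, so that Theorem~\ref{th:DSpropa} is applicable to it. Second, I would translate the given compactness condition ``for every compact $B\subset M$, the set $\fut B\cap A$ is compact'' into the statement that $A$ is $Z_\lambda$-proper in the sense of Definition~\ref{def:Zproper}; this equivalence is recorded as item~(e) in the list of properties following the characterization~\eqref{hyp:Zlambda}. Third, from $A=\pas A$ and part~(b) of the Proposition I would obtain $\SSi(\cor_A)\subset\lambda^a$. Combined with the given microsupport hypothesis $\SSi(F)\cap\lambda^a\subset T^*_MM$ and the assumption $A\neq M$, all the hypotheses of Theorem~\ref{th:DSpropa} are satisfied, so that theorem yields the desired vanishing $\rsect_A(M;F)\simeq 0$.

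There is no genuine obstacle in this argument: the heart of the matter (building the convex propagator $(Z_\lambda,\lambda)$ out of the preorder, and controlling $\SSi(\cor_A)$ under the invariance condition $A=\pas A$) has already been carried out in the Proposition, itself relying on the strict normal cone and the estimate $\SSi(\cor_A)\subset N(A)^\circ$ from~\cite[Prop.~5.3.8]{KS90}. The only care required is to align the notations; in particular to notice that Definition~\ref{def:Zproper} applied to $Z=Z_\lambda$ reads as properness of $q_1$ on $Z_\lambda\cap \opb{q_2}A$, which by the closedness and compactness properties of $Z_\lambda$ is tantamount to the stated compactness of $\fut B\cap A$ for all compact $B$.
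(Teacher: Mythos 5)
Your proposal is correct and is exactly the argument the paper intends: Theorem~\ref{th:DSpropa2} is presented there as a reformulation of Theorem~\ref{th:DSpropa}, obtained by taking $Z=Z_\lambda$, invoking parts (a) and (b) of the Proposition, and using the stated equivalence between $Z_\lambda$-properness of $A$ and compactness of $\fut B\cap A$ for all compact $B$ (item (v) of the list, not ``(e)'', but that is a trivial labelling slip).
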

Now let us take for $F$ the complex of hyperfunction solutions of a $\shd_M$-module $\shm$. We obtain

\begin{corollary}
Let $(M,\lambda)$ and $A$ be as in {\rm Theorem~\ref{th:DSpropa2}}. 
Let $\shm$ be a coherent $\shd_X$-module and assume that $\lambda\cap\hchv_M(\shm)\subset T^*_MM$.
In other words, all non-zero vectors of $\lambda$ are hyperbolic for $\shm$. 
Then $\RHom[\shd_X](\shm,\sect_A\shb_M)\simeq0$ or equivalently
\eqn
&&\rsect(M;\rhom[\shd_X](\shm,\shb_M))\isoto\rsect(M\setminus A;\rhom[\shd_X](\shm,\shb_M)).
\eneqn
\end{corollary}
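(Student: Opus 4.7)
The plan is to apply Theorem~\ref{th:DSpropa2} to $F\eqdot\rhom[\shd_X](\shm,\shb_M)\in\Derb(\cor_M)$. The hypotheses on $(M,\lambda)$ and $A$ are already part of the statement, so the only thing to verify is the microsupport bound $\SSi(F)\cap\lambda^a\subset T^*_MM$.

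By Theorem~\ref{th:hyp3} we have $\SSi(F)\subset\hchv_M(\shm)$, and the corollary's assumption reads $\lambda\cap\hchv_M(\shm)\subset T^*_MM$, so the remaining step is to pass from $\lambda$ to $\lambda^a$. For this I observe that $\hchv_M(\shm)$ is invariant under $\theta\mapsto-\theta$: indeed $\chv(\shm)$ is $\C^\times$-conic in $T^*X$, hence stable under $\zeta\mapsto-\zeta$, and $T^*_MX$ is obviously stable under this involution; consequently the normal cone $C_{T^*_MX}(\chv(\shm))$ inherits the symmetry, and so does its intersection with $T^*M$. Hence
\eqn
&&\SSi(F)\cap\lambda^a\subset\hchv_M(\shm)\cap\lambda^a=(\hchv_M(\shm)\cap\lambda)^a\subset T^*_MM,
\eneqn
and Theorem~\ref{th:DSpropa2} yields $\rsect_A(M;F)\simeq 0$.

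To match the two formulations in the statement, one identifies $\RHom[\shd_X](\shm,\sect_A\shb_M)$ with $\rsect_A(M;F)$ (since $\rsect_A$ commutes with $\rhom[\shd_X](\shm,\scdot)$) and uses the distinguished triangle $\rsect_A(M;F)\to\rsect(M;F)\to\rsect(M\setminus A;F)\to[+1]$ to convert the vanishing into the claimed isomorphism. There is no real obstacle in this argument; the only point deserving care is the symmetry $\hchv_M(\shm)=\hchv_M(\shm)^a$, which is a direct consequence of the $\C^\times$-conicity of characteristic varieties rather than mere $\R^+$-conicity.
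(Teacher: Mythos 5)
Your proof is correct and follows the paper's intended route: the paper states this corollary as an immediate consequence of Theorem~\ref{th:DSpropa2} applied to $F=\rhom[\shd_X](\shm,\shb_M)$, with the microsupport bound supplied by Theorem~\ref{th:hyp3}. You also rightly supply the one point the paper leaves implicit, namely the antipodal symmetry $\hchv_M(\shm)=\hchv_M(\shm)^a$ (coming from the $\C^\times$-conicity of $\chv(\shm)$) needed to pass from the hypothesis on $\lambda$ to the condition on $\lambda^a$ required by Theorem~\ref{th:DSpropa2}.
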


\begin{example}\label{exa:Phyp}
Let us particularize to the case of a single differential operator, that is, $\shm=\shd_X/\shd_X\cdot P$.
We find that $P$ induces an isomorphism $\sect_A(M;\shb_M)\isoto \sect_A(M;\shb_M)$. In particular
\begin{itemize}
\item 
for any $v\in \sect_A(M;\shb_M)$, there exists a unique $u\in \sect_A(M;\shb_M)$ such that $Pu=v$,
\item 
any $u\in\sect(M\setminus A;\shb_M)$ solution of $Pu=0$ extends uniquely all over $M$ as a solution of this equation.
\end{itemize}
\end{example}

\begin{corollary}
Let $(M,\lambda)$ be a causal manifold, $N$ a hypersurface which divides $M$
into two closed sets $M^+$ and $M^-$, and let $\shm$ be a coherent $\shd_X$-module.
Assume
\banum
\item
$M^\pm\not=M$,
$M^-=\pas{M^-}$, $M^+=\fut{M^+}$ and for any compact subset $B$ of $M$, 
the sets $\fut B\cap M^-$ and  $\pas B\cap M^+$ are
compact,
\item
$T^*_NM\subset\lambda\cup\lambda^a$,
\item
  $\lambda\cap\hchv_M(\shm)\subset T^*_MM$.
\eanum
Then the restriction morphism
$\RHom[\shd_X](\shm,\shb_M)\to\RHom[\shd_Y](\shm\vert_Y,\shb_N)$ 
is an isomorphism.
In other words, the Cauchy problem for hyperfunctions with data on $N$ is globally well-posed.
\end{corollary}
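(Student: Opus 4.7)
Set $F\eqdot\rhom[\shd_X](\shm,\shb_M)$. My plan is to combine the local Cauchy theorem at $N$ (Theorem~\ref{th:hyp4}) with the global propagation theorem (Theorem~\ref{th:DSpropa2}) applied \emph{on each side} of $N$, and to glue the two sides via the distinguished triangle attached to the closed submanifold $N\subset M$.

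\emph{Step 1: local Cauchy.} First I verify the hypothesis of Theorem~\ref{th:hyp4}. Since $\chv(\shm)$ is $\C^\times$-conic and $T^*_M X$ is a vector subbundle of $T^*X$, both are preserved by the fiberwise involution $\zeta\mapsto -\zeta$, hence so is the normal cone $C_{T^*_M X}(\chv(\shm))$; consequently $\hchv_M(\shm)\subset T^*M$ is stable under $\theta\mapsto -\theta$. Combined with (c) this yields $\lambda^a\cap\hchv_M(\shm)\subset T^*_M M$, and together with (b) I get $T^*_N M\cap\hchv_M(\shm)\subset T^*_M M$. Theorem~\ref{th:hyp4} then provides the local Cauchy isomorphism $F|_N\isoto\rhom[\shd_Y](\shm_Y,\shb_N)$.

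\emph{Step 2: global propagation on each side.} By Theorem~\ref{th:hyp3}, $\SSi(F)\subset\hchv_M(\shm)$, so both $\SSi(F)\cap\lambda$ and $\SSi(F)\cap\lambda^a$ lie in $T^*_M M$. I apply Theorem~\ref{th:DSpropa2} to $A=M^-$ with the causal manifold $(M,\lambda)$ --- the hypotheses on $A$ are exactly those in (a) --- to obtain $\rsect_{M^-}(M;F)\simeq 0$. Applying the same result to $A=M^+$ with the causal manifold $(M,\lambda^a)$, in which $\pas$ and $\fut$ are swapped, yields $\rsect_{M^+}(M;F)\simeq 0$. The distinguished triangles $\rsect_{M^\mp}(M;F)\to\rsect(M;F)\to\rsect(\Int M^\pm;F)\to[+1]$ then show that both restriction maps $\rsect(M;F)\isoto\rsect(\Int M^\pm;F)$ are isomorphisms.

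\emph{Step 3: glueing.} Since $M\setminus N=\Int M^+\sqcup\Int M^-$, I use the distinguished triangle
\[ \rsect_N(M;F)\to\rsect(M;F)\to\rsect(\Int M^+;F)\oplus\rsect(\Int M^-;F)\to[+1]. \]
After identifying each summand on the right with $\rsect(M;F)$ via the inverse restrictions from Step 2, the second arrow becomes the diagonal $\Delta$, whose fiber in any triangulated category is $\rsect(M;F)[-1]$. Hence $\rsect_N(M;F)\simeq\rsect(M;F)[-1]$. On the other hand Theorem~\ref{th:divCK} (whose hypothesis is verified in Step 1) gives $(\rsect_N F)|_N\simeq F|_N\tens\ori_{N/M}^{-1}[-1]$; since $N$ divides $M$ into the two closed sets $M^\pm$, the normal bundle is trivial and $\ori_{N/M}\simeq\cor_N$, so $\rsect_N(M;F)\simeq\rsect(N;F|_N)[-1]$. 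Comparing the two expressions will yield $\rsect(M;F)\isoto\rsect(N;F|_N)$, which by Step 1 is $\RHom[\shd_Y](\shm|_Y,\shb_N)$.

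The most delicate point will be \emph{naturality}: I must check that the composite isomorphism above coincides with the canonical restriction morphism coming from Theorem~\ref{th:CKK}. This should follow from the naturality in $F$ of the Cauchy isomorphism, of the propagation vanishings, and of the distinguished triangles used in Step 3; but carefully tracking the restriction morphism through these identifications is the only real bookkeeping required.
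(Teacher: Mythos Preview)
The paper states this corollary without proof, so there is nothing to compare against directly; your argument is precisely the intended assembly of Theorem~\ref{th:hyp4}, Theorem~\ref{th:hyp3}, and Theorem~\ref{th:DSpropa2} applied to each half $M^\pm$, and it is correct.

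The naturality you flag is genuine but straightforward once made explicit. The Mayer--Vietoris triangle
\[
\rsect_N F\to\rsect_{M^+}F\oplus\rsect_{M^-}F\to F\to[+1]
\]
on $M$ restricts along $N\hookrightarrow M$ to the triangle~\eqref{eq:dtNM}, and the connecting map of the latter is, by construction in codimension one, the morphism~\eqref{eq:sectNvertN}. This gives a commutative square
\[
\begin{array}{ccc}
\rsect(M;F) & \To & \rsect_N(M;F)[1] \\[1ex]
\big\downarrow & & \big\downarrow \\[1ex]
\rsect(N;F\vert_N) & \To & \rsect\bl N;(\rsect_N F)\vert_N\br[1]
\end{array}
\]
whose horizontal arrows are the connecting maps and whose left vertical arrow is the restriction. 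Your Step~2 makes the top arrow an isomorphism, Theorem~\ref{th:divCK} makes the bottom one an isomorphism, and the right vertical is an isomorphism because $\rsect_N F$ is supported on $N$. Hence the restriction $\rsect(M;F)\to\rsect(N;F\vert_N)$ is an isomorphism, and composing with the local Cauchy isomorphism of Step~1 gives exactly the morphism of the statement.
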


\begin{remark}
One shall be aware that hypothesis~\eqref{hyp:Zlambda} may be satisfied on $M$ and not on an open subset of $M$. Following~\cite[Rem.~3.1.5]{BGP07} consider $M=\R\times\R^n$ with linear coordinates $x=(x_0,x')$ and the closed proper cone 
$\lambda=\{x;\xi_0,\xi');\xi_0\geq\vert\xi'\vert\}$ of $T^*M$. It is easy to construct a convex open set $\Omega$ 
and $x,y\in\Omega$ such that $\fut {\{y\}} \cap \pas {\{x\}}\cap\Omega$ is not compact and to construct a non zero solution $u$ of the equation $Pu=0$ on $\Omega$, where $P$ is the wave equation, with support contained in  
$\pas {\{x\}}\cap\Omega$.
\end{remark}

\providecommand{\bysame}{\leavevmode\hbox to3em{\hrulefill}\thinspace}

\vspace*{1cm}
\noindent
\parbox[t]{21em}
{\scriptsize{
\noindent
Pierre Schapira\\
Institut de Math{\'e}matiques,
Universit{\'e} Pierre et Marie Curie\\
4 Place Jussieu, 7505 Paris\\
e-mail: schapira@math.jussieu.fr\\
and\\
Mathematics Research Unit, \\
University of Luxemburg\\
http://www.math.jussieu.fr/\textasciitilde schapira/
}}
\end{document}